\newcommand{\mca}{\mathcal{A}}
\newcommand{\mco}{\mathcal{O}}
\newcommand{\rp}{\mathbb{R}}
\newcommand{\cp}{\mathbb{C}}
\newcommand{\pp}{\mathbb{P}}
\newcommand{\np}{\mathbb{N}}
\newcommand{\zp}{\mathbb{Z}}
\newcommand{\om}{\omega}
\newcommand{\ep}{\epsilon}
\newcommand{\wt}{\widetilde}
\author{Martin Weimann}
\address{Universit\'e J. Fourier\\100 rue des Maths BP 74\\38402 St Martin d'Heres.}
\email{weimann23@gmail.com}
\title[Concavity, Abel-transform and the Abel-inverse theorem in toric...]{Concavity, Abel-transform and the Abel-inverse theorem in smooth complete toric varieties}
\begin{document}

\frontmatter

\begin{abstract}
We extend the usual projective Abel-Radon transform to the larger context of a smooth complete toric variety $X$. We define and study toric $E$-concavity attached to a split vector bundle on $X$. Then we obtain a multidimensional residual representation of the toric Abel-transform and we prove a toric version of the classical Abel-inverse theorem.
\end{abstract}

\begin{altabstract}
Nous \'etendons la transform\'ee d'Abel-Radon projective au cadre plus large d'une vari\'et\'e torique lisse compl\`ete $X$. Pour ce faire, nous d\'efinissons et \'etudions dans un premier temps la notion de $E$-concavit\'e torique attach\'ee \`a un fibr\'e vectoriel scind\'e $E$ sur $X$. Finalement, nous d\'efinissons la transform\'ee d'Abel torique et nous prouvons une version torique du th\'eor\`eme d'Abel-inverse \`a l'aide du calcul r\'esiduel multivari\'e.
\end{altabstract}

\maketitle
\tableofcontents

\mainmatter

\section{Introduction}
Abel-tranform's philosophy was introduced by Abel in its pionneer article on integrals of rational forms on algebraic curves. He considered sums of such integrals depending on $0$-cycles given by the intersection of the given curve $V$ with a rational family of algebraic curves $(C_a)_{a\in A}$ and made the fundamental discovery that such sums can be expressed in terms of rational and logarithmic functions of the parameter $a$. 

If we suppose that $V$ is a curve in $\pp^2$ and $C_a$ are lines of $\pp^2$ (so that $A=(\pp^{2})^*$), then Abel's theorem can be rephrased as follow. Consider the incidence variety $I_V=\{(x,a)\in V\times (\pp^{2})^*, x\in C_a\}$ over $V$ and its associated diagram
\begin{eqnarray*}
& I_V & \\
p &\swarrow \quad \searrow & q \\
V & \quad \quad & (\pp^2)^*
\end{eqnarray*}
where $p$ and $q$ are the natural projections. 
{\AbelTheorem
For any rational $1$-form $\phi$ on $V$, the current $q_* p^* \phi$ coincides with a rational $1$-form on $(\pp^{2})^*$.}
\vskip2mm
\noindent
The original proof of Abel deals with more general curves than lines. We state it for lines for simplicity, since the proof is immediate in both cases with the modern language of currents. 

The map $T\mapsto q_* p^* T$ on currents is well-defined since $p$ is a submersion and $q$ is proper. This is the so called Abel-transform. On the Zariski open set of $(\pp^{2})^*$ consisting of elements $a$ for which the intersection $V\cdot C_a$ is transversal and does not meet the polar locus of $\phi$, the current $q_* p^* \phi$ is holomorphic, equal to the trace form
$$
Tr_V\phi (a):=\sum_{p\in V\cap C_a} \phi(p).
$$
Abel's theorem gave new perspectives as well in complex analysis as in algebraic geometry and number theory. In particular, studying analytic sets in the complex projective space $X=\mathbb{P}^n$ using scanning by linear subspaces of complementary dimension gave rise to an intense activity around inversion problems by several mathematiciens, as Lie, St-Donnat \cite{SD:gnus}, Wirtinger, Darboux, Griffiths \cite{Griffiths:gnus}, Henkin \cite{henkin1:gnus, henkin:gnus}, Passare \cite{hp:gnus}. Let us explain the so-called Abel-inverse Theorem obtained by Saint-Donat (\cite{SD:gnus}, 1975) and Henkin (\cite{henkin1:gnus}, 1992) for $n=2$.

We consider $U\subset \pp^2$ an open neighborhood of a line $C_{\alpha}\subset \pp^2$ and $V\subset U$ an analytic curve transversal to $C_{\alpha}$. For $U$ small enough, we can define as before the trace of any holomorphic $1$-form $\phi$ on $V$, which is a holomorphic $1$-form $Tr_V\phi$ on the open set $U^{'}\subset (\pp^{2})^*$ of lines included in $U$. 

{\AbelInverseTheorem 
Let $N$ be the cardinality of $V\cap C_{\alpha}$. The analytic curve $V$ is contained in an algebraic curve  $\wt{V}\subset\mathbb{P}^2$ of degree $N$ and $\phi$ extends to a rational (resp. regular) form $\wt{\phi}$ on $\wt{V}$ if and only if the form $Tr_V \phi$ is rational (resp. vanishes) on $(\pp^{2})^*$.}
\vskip2mm
\noindent
Let us insist on the important contributions of Griffiths (\cite{Griffiths:gnus}, 1976) who introduced Grothendieck residues in the picture and of Henkin (\cite{henkin:gnus}, 1995) who introduced the notion of Abel-Radon transform. Latter on, Henkin and Passare (\cite{hp:gnus}, 1999) established the link with multidimensional residue theory, in particular the modern formalism of residue currents, and proved in \cite{hp:gnus} a stronger local version of the Abel-inverse theorem which holds for any dimension and for analytic subset $V\subset U\subset \pp^n$ of any pure dimension $k$ in an open neighborhood $U$ of a $(n-k)$-linear subspace, and for $\phi$ a meromorphic $k$-form on $V$. Finally, Fabre generalized in his thesis \cite{Fabre:gnus} the Abel-inverse theorem to the more general situation of complete intersections with fixed multi-degree replacing linear subspaces.

Our main goal here is to give a toric version of the Abel-inverse theorem, where a smooth complete toric variety $X$ replaces $\pp^n$, and zero locus of sections of a globally generated split vector bundle $E=L_1\oplus\cdots\oplus L_{k}$ on $X$ replaces linear subspaces. This article has been motivated by the previously mentionned result of Fabre and by results of \cite{WM2:gnus}, where the author relates the inversion mechanisms to a rigidity propriety of a certain system of PDE's to give finally a stronger form of the classical Abel-inverse theorem.

When $n\ge 3$, most of complete toric varieties are not projective, so that we can not {\it a priori} deduce the toric Abel-inverse theorem (theorem~\ref{t3.2}) from the classical one \cite{hp:gnus} when $X=\pp^n$. Thus, our proof does not rely on a projective embedding of $X$ and follows the one given in \cite{WM2:gnus}. Moreover, recent results of \cite{and:gnus, shchuplev:gnus, STY:gnus} suggest that there are canonical kernels for residue currents in complete toric varieties $X$ who replaces the classical Cauchy or Bochner-Martinelli kernels. This is an other important motivation for an intrinsec approach of Abel-tranform in toric varieties (even projective ones), especially in view of effectivity aspects in Abel-inverse problems. 

Let us describe the content of the article.
\vskip2mm
\noindent
{\it Section~\ref{sec:2}.} We define and study the notion of toric concavity attached to the bundle $E$ on the toric variety $X$. In analogy with the projective case, an open set $U$ of $X$ (for the usual topology) is said to be $E$-concave if any $x$ in $U$ belongs to a subscheme $C=\{s=0\}$, $s\in\Gamma(X,E)$ included in $U$. Contrarly to the projective case, the toric context brings us to study which bundles $E$ give rise to families of subvarieties which can move ``sufficiently'' in $X$, that his bundles $E$ for which there exist non trivial $E$-concave open sets in $X$. 

The main theorem of that section concerns the set theoretical orbital decomposition of such subschemes, called  $E$-subscheme. 

{\TheoremUn
A generical $E$-subscheme can be uniquely decomposed as the cycle
$$
C=\sum \nu_{I,\tau}C_{I,\tau},\quad \nu_{I,\tau}\in \{0,1\}
$$
where $\tau$ runs over the cones of the fan $\Sigma$ of $X$, $I$ runs over the subsets of $\{1,\ldots,k\}$ and the $C_{I,\tau}$ are smooth $|I|$-codimensional subvarieties of the toric subvariety $V(\tau)\subset X$ attached to $\tau$, with transversal or empty intersection with all orbits closures included in $V(\tau)$.}

The integers $\nu_{I,\tau}\in \{0,1\}$ are explicitly determined from geometry of polytopes. If $E$ is globally generated, they are all zero except possibly for $I=\{1,\ldots,k\}$ and $\tau=\{0\}$ (so that $V(\tau)=X$). We state precisely and prove theorem~\ref{t2.1} in section~\ref{sec:2.2}, after some reminders on toric geometry in section~\ref{sec:2.1}.

We introduce $E$-concavity in section~\ref{sec:2.3}. Let 
$$
X^{'}=X^{'}(E):=\pp(\Gamma(X,L_1))\times\cdots\times \pp(\Gamma(X,L_k))
$$
be the parameter space corresponding to $E=L_1\oplus\cdots\oplus L_k$. Any $a\in X^{'}$ naturaly defines a closed $E$-subscheme $C_a\subset X$. We deduce from theorem~\ref{t2.1} that the $E$-dual set 
$$
U^{'}:=\{a\in X^{'}, C_a\subset U\}
$$
of an $E$-concave open set $U$ is open in $X^{'}$ if and only if the bundle $E$ is globally generated, and the family $(L_1,\ldots,L_k)$ satisfies an additional combinatorial condition on the involved polytopes, called condition of essentiality. We show that these conditions are equivalent to that the projections $p_U$ and $q_U$ of the incidence variety 
$$
I_U=\{(x,a)\in X\times X^{'}, x\in C_a\}
$$
on $U$ and $U^{'}$ are respectively a submersion and a submersion over a Zariski open set ${\rm Reg}\,U^{'}\subset U^{'}$. This is the reasonnable situation to generalize the usual Abel-transform, as explained in \cite{Fabre:gnus}.

We study in section~\ref{sec:2.4} the $E$-dual set 
$$
V^{'}:=\{a\in U^{'}, C_a\cap V\ne \emptyset\}
$$
of an analytic subset $V$ of an open $E$-concave subset $U$ of $X$. Contrarly to the projective space where we profit that the Picard group is simply $\zp$, there are in the toric context pathologic situations of analytic subsets whose intersection with $E$-subschemes is never proper. We characterize those degenerated subsets in the algebraic case $U=X$. Finally, we extend $E$-duality to the case of cycles and, using resultant theory, we compute the multidegree (in the product of projective space $X^{'}$) of the divisor $E$-dual to a $(k-1)$-cycle of $X$.

\begin{rema}
Linear concavity and convexity play an important role in complex analysis when studying varying transforms as Abel, Radon, Abel-Radon or Fantappi\'e-Martineau transforms (see \cite{AndPasSig:gnus, henkin:gnus} for instance). In that spirit, recent results \cite{shchuplev:gnus, STY:gnus} suggest that concavity in smooth complete toric varieties as developed here should simplify the description of various transforms, using some global intrinsec integral representation for residue currents \cite{and:gnus}, whose kernel is canonically associated to $X$ and $E$.
\end{rema}
\noindent
{\it Section~\ref{sec:3}.} We generalize the Abel-transform to the toric context and we prove the toric version of the Abel-inverse theorem, using the Cauchy residual representation of the Abel-transform. 

After reminders in section~\ref{sec:3.1} about meromorphic and regular forms on analytic sets, we define in section~\ref{sec:3.2} the toric Abel-transform attached to an essential globally generated vector bundle $E=L_1\oplus\cdots\oplus L_k$. For any closed analytic subset $V$ of an $E$-concave set $U\subset X$, we can multiply the current of integration $[V]$ with any meromorphic $q$-form $\phi$ on $V$. The Abel-transform of the resulting current $[V]\land \phi$ is the current on $U^{'}$ defined by
$$
\mathcal{A}([V]\land\phi):=q_{U*}p_{U}^*([V]\land \phi)
$$
where $p_{U}^*([V]\land \phi):=[p_U^{-1}(V)]\land p_{U}^*\phi$. If $V$ is smooth and $k$-dimensional with transversal intersection 
$$
V\cap C_a=\{p_1(a),\ldots,p_N(a)\}
$$
for $a$ in $U^{'}$, and if $\phi$ is a global section of the sheaf $\Omega^q_V$ of holomorphic $q$-forms on $V$, the current $\mathcal{A}([V]\land\phi)$ is a $\bar{\partial}$-closed $(q,0)$-current on $U^{'}$ which coincides with the $q$-holomorphic form
$$
Tr_V\phi(a):=\sum_{j=1}^N p_j^*(\phi)(a),
$$
called the trace of $\phi$ on $V$ according to $E$. If $V$ is singular and $\phi$ is meromorphic, the current representation of the trace implies easily, as in the projective case \cite{hp:gnus}, that the $q$-form $Tr_V\phi$ originally defined and holomorphic on a dense open subset of $U^{'}$ extends to a $q$-meromorphic form on $U^{'}$. Moreover, the map $\phi\mapsto Tr_V\phi$ commutes with the operators $d$, $\partial$ and $\bar{\partial}$ and thus induces a morphism on the corresponding graded complex vector spaces.

In section~\ref{sec:3.3}, we give, as in \cite{WM2:gnus}, a residual representation of the form $Tr_V\phi$, using Grothendieck residues and Cauchy integrals depending meromorphically of the parameter $a$. As mentioned before, it should be interesting to obtain an intrinsec integral representation of the current $Tr_V\phi$ using a global kernel constructed from the toric variety $X$ and the bundle $E$ (see \cite{and:gnus, shchuplev:gnus} for such motivations).

Finally, we show in section~\ref{sec:3.4} the toric version of the Abel-inverse theorem in the hypersurface case, under its algebraic strongest form stated in \cite{WM2:gnus}. We assume that $E=L_1\oplus\cdots\oplus L_{n-1}$ is an essential globally generated bundle which satisfies the following additional condition~: there exists an affine  chart $U_{\sigma}\simeq \cp^n$ of $X$ associated to a maximal cone $\sigma$ of the fan $\Sigma$, such that for any $n-1$-dimensional cone $\tau\subset\sigma$, one has $dim\, H^0(V(\tau),L_{i|V(\tau)})\ge 2$ for $i=1,\ldots,n-1$, where $V(\tau)$ is the one dimensional toric subvariety of $X$ associated to $\tau$. There always exists such a bundle $E$ on $X$, even if $X$ is not projective.

{\TheoremDeux
Let $V\subset U$ be a codimension 1 analytic subset of a connected $E$-concave open set $U\subset X$ with no components in the hypersurface at infinity $X\setminus U_{\sigma}$. Let $\phi$ be a meromorphic $(n-1)$-form on $V$ not identically zero on any component of $V$. Then there exists an hypersurface $\wt{V}\subset X$ such that $\wt{V}_{|U}=V$ and a rational form $\wt{\phi}$ on $\wt{V}$ such that $\wt{\phi}_{|V}=\phi$ if and only if the meromorphic form $Tr_V\phi$ is rational in the constant coefficients $a_0=(a_{1,0},\ldots,a_{n-1,0})$ of the $n-1$ polynomial equations of $C_a$ in the affine chart $U_0$.}

Let us mention that this theorem is intimely linked to the toric interpolation result in \cite{WM1:gnus}, where $\phi$ is replaced by some rational function on $X$.

\section{Toric concavity}\label{sec:2}

\subsection{Preleminaries on toric geometry}\label{sec:2.1}

We refer to \cite{Danilov:gnus}, \cite{Ewald:gnus} and \cite{Fulton:gnus} for basic references on toric geometry.

\subsubsection{Basic notions}\label{sec:2.1.1}

Let $X$ be an $n$-dimensional smooth complete toric variety associated to a complete regular fan $\Sigma$ in $\rp^n$. We note $\mathbb{T}:=(\cp^*)^n$ the algebraic torus contained in $X$ equipped with its canonical coordinates $t=(t_1\ldots,t_n)$. For $m=(m_1,\ldots,m_n)\in \zp^n$,  we denote by $t^m$ the Laurent monomial $t_1^{m_1}\cdots t_n^{m_n}$. We note $\mco_X$ the structure sheaf and $\cp(X)$ the field of rational functions on $X$.

The set of irreducible $k$-codimensional subvarieties of $X$ invariant under the torus action (called $\mathbb T$-subvarieties) is in one-to-one correspondence with the set $\Sigma(k)$ of cones of dimension $k$ in $\Sigma$. For a cone $\tau$ in $\Sigma$, we note $V(\tau)$ the corresponding subvariety, $\tau(r)$ the set of $r$-dimensional cones of $\Sigma$ included in $\tau$ and 
$$
\check{\tau}:=\{m\in\zp^n, \langle m,\eta_{\rho} \rangle \ge 0\quad \forall \rho \in \tau(1)\}
$$
the dual cone of $\tau$, where $\eta_{\rho}\in\zp^n$ is the unique primitive vector of the monoid $\rho\cap \zp^n$ and $\langle \cdot,\cdot \rangle$ denotes the usual scalar product in $\rp^n$. If $\sigma$ is a cone of maximal dimension $n$, the corresponding affine toric variety $U_{\sigma}:={\rm Spec} \cp[\check{\sigma}\cap \zp^n]$ is isomorphic to $\cp^n$. The compact toric variety $X$ is obtained by gluing together the affine charts $U_{\sigma}$ and $U_{\sigma'}$ along their common open set $U_{\sigma\cap \sigma'}$. The associated transition maps are given by monomial applications induced by the change of basis from $\check{\sigma}\cap \zp^n$ to $\check{\sigma'}\cap \zp^n$. If the monoid $\check{\sigma}\cap \zp^n$ admits $(m_1(\sigma),\ldots,m_n(\sigma))$ as a $\zp$-basis, we note
$$
x_{\sigma}=(x_{1,\sigma},\ldots,x_{n,\sigma}):=(t^{m_1(\sigma)},\ldots,t^{m_n(\sigma)})
$$
the associated {\it canonical system of affine coordinates} in the chart $U_{\sigma}$ corresponding bijectively to the one-dimensional cones $\rho_1,\ldots,\rho_n$ included in $\sigma$. We note 
$$
\phi_{\sigma}:\rp^n\to \rp^n
$$
the isomorphism which sends $m_i(\sigma)$ on the $i$-th vector $e_i$ of the canonical basis of $\rp^n$ such that $e_i$ corresponds to $x_{i,\sigma}$. 
If $\tau\in \sigma(k)$ is generated by $\{\rho_1,\ldots,\rho_k\}$, the toric variety $V(\tau)$ intersected with the affine chart $U_{\sigma}$ corresponds to the coordinate linear subspace
$$
V(\tau)_{|U_{\sigma}}=\{x_{1,\sigma}=\cdots=x_{k,\sigma}=0\}.
$$
The variety $X$ has the set theoretical representation  
$$
X=\mathbb{T}\cup\bigcup_{\rho\in\Sigma(1)} D_{\rho},
$$
where the $D_{\rho}$'s are the irreducible $\mathbb T$-divisors attached to the one-dimensional cones (the rays) of $\Sigma$. Any $\mathbb T$-divisor $D$ on $X$,
$$
D=\sum_{\rho\in\Sigma(1)} k_{\rho} D_{\rho},\quad k_{\rho}\in \zp,
$$
is a Cartier divisor with local data $(U_{\sigma}, t^{s_{\sigma,D}})_{\sigma\in \Sigma(n)}$ where the vectors $s_{\sigma,D}\in \zp^n$ are uniquely determined by the $n$ equalities
$$
\langle s_{\sigma,D},\eta_{\rho} \rangle= k_{\rho} \quad\forall \rho\in \sigma(1).
$$
It is well known \cite{Fulton:gnus} that the vector space $\Gamma(X,L(D))$ of global sections of the line bundle $L(D)$ attached to $D$ is isomorphic to the vector space of Laurent polynomials supported in the convex integer polytope 
$$
P_D=\{m\in \rp^n, \langle m,\eta_{\rho} \rangle + k_{\rho} \ge 0 \quad\forall \rho\in \sigma(1)\}.
$$
Such a Laurent polynomial $f=\sum_{m\in P_D\cap \zp^n}c_mt^m$ defines a global section of $L(D)$ given in the affine chart $U_{\sigma}$ by the polynomial 
$$
f^{\sigma}\in \cp[x_{1,\sigma},\ldots,x_{n,\sigma}]
$$
expressing the Laurent polynomial $t^{-s_{\sigma,D}}f$ (which belongs to $\cp [\check{\sigma}\cap \zp^n]$ by assumption) in the affine coordinates $x_{\sigma}$. The polynomial $f^{\sigma}$ is supported in the polytope $\Delta_{D,\sigma}$ (contained in $(\rp^+)^n$), image of the translated polytope $P_D-s_{\sigma,D}$ under the isomorphism $\phi_{\sigma}$ defined before.

The complete linear system $|L(D)|$ of effective divisors rationally equivalent to $D$ is isomorphic to the projective space $\pp(\Gamma(X,L(D)))\simeq \pp^{l(D)-1}$ where $l(D)$ is the cardinality of $P_D\cap \zp^n$.

\subsubsection{Globally generated line bundles}\label{sec:2.1.2}

We refer to \cite{WM3:gnus} for this part. Any effective $\mathbb{T}$-divisor $D=\sum_{\rho\in\Sigma(1)} k_{\rho} D_{\rho}$ on $X$ can be uniquely decomposed in a sum $D=D'+D''$ of a ``mobile'' divisor $D'$ corresponding to a globally generated line bundle and a ``fixed'' effective divisor $D''$. That means that the zero divisor of any section $s\in\Gamma(X,L(D))$ satisfies
$$
{\rm div}_0 s = {\rm div}_0 s' + D'',
$$
where $s'\in \Gamma(X,L(D'))$. The mobile divisor $D'$ is equal to 
$$
D'=\sum_{\rho\in\Sigma(1)} k_{\rho}' D_{\rho}
$$
where $k_{\rho}':=-\min_{m\in P_E} \langle m,\eta_{\rho} \rangle$. Note that $0\le k_{\rho}'\le k_{\rho}$ so that $D''=D-D'$ is effective. 

For any cone $\tau\in \Sigma$, we consider the two convex polytopes 
\begin{eqnarray*}
P_D^{\tau}&:=&\{m\in P_E, \langle m,\eta_{\rho} \rangle =-k'_{\rho}\quad \forall \rho \in \tau(1)\}\quad{\rm and}\\
P_D^{(\tau)}&:=&\{m\in P_E, \langle m,\eta_{\rho} \rangle =-k_{\rho}\quad \forall \rho \in \tau(1)\}.
\end{eqnarray*}
We call $P_D^{\tau}$ the face of $P_D$ associated to $\tau$ and $P_D^{(\tau)}$ the virtual face of $P_D$ associated to $\tau$, since it can be empty \cite{Ewald:gnus}. The $\mathbb{T}$-subvariety $V(\tau)$ is included in the base locus
$$
BS(L(D)):=\{x\in X, s(x)=0 \quad \forall s \in \Gamma(X,L(D))\}
$$
of the linear system $|L(D)|$ if and only if the associated virtual face $P_D^{(\tau)}$ is empty. In particular, considering $\tau=\sigma\in\Sigma(n)$ (for which $V(\sigma)$ is the fixed point corresponding to the origin of the chart $U_{\sigma}$), we remark that the line bundle $L(D)$ is globally generated if and only if the vectors $s_{\sigma,D}$ belong to $P_D$ (that is $0\in \Delta_{D,\sigma}$) for all $\sigma\in\Sigma(n)$. That means that in any affine chart, the polynomial equation of a generic divisor $H\in |L(D)|$ has a non zero constant term.

If the line bundle $L(D)$ is globally generated, it restricts for any $\tau\in \Sigma$ to a globally generated line bundle $L(D)^{\tau}$ on $V(\tau)$, whose polytope can be naturally identified with the polytope $P_D^{\tau}$, equal to $P_D^{(\tau)}$ in that case.

\subsection{The orbital decomposition theorem}\label{sec:2.2}

Let $L_1,\ldots  ,L_k$ be a family of line bundles on $X$ attached to a collection of effective $\mathbb T$-divisors $D_1,\ldots  ,D_k$, with polytopes $P_{1}=P_{D_1},\ldots,P_{k}=P_{D_k}$ defined as before. We note $E=L_1\oplus\cdots\oplus L_k$ the associated rank $k$ vector bundle. We say that a subscheme $C\subset X$ is an $E$-subscheme if it is the zero set of a global section of $E$. 

This section deals with the generic structure of an $E$-subschemes, where generically means for $s$ in a Zariski open set of the vector space $\Gamma(X,E)$. 

We use the following definition introduced in \cite{Kho:gnus} for $k=n$:
\begin{defi} 
A family $(P_1,\ldots  ,P_r)$ of polytopes in $\rp^n$ is called essential if for any subset $I$ of $\{1,\ldots  ,r\}$, the dimension of the Minkowski sum $\sum_{i\in I}P_i$ is at least the cardinality $|I|$ of $I$. The bundle $E$ (or the family $L_1,\ldots,L_k$) is called essential if the associated family of polytopes is.
\end{defi}

Let us state the orbital decomposition theorem, using notations from section~\ref{sec:2.1}.

\begin{theo}\label{t2.1}
A generic $E$-subscheme can be uniquely decomposed as the cycle
$$
C=\sum_{\begin{matrix} I\subset \{1,\ldots  ,k\}\\ \tau\in \Sigma
\end{matrix}} \nu_{I,\tau}C_{I,\tau}
$$
where the $C_{I,\tau}$ are smooth subvarieties, complete intersection of codimension $|I|$ in $V(\tau)$, with transversal or empty intersection with orbits included in $V(\tau)$, and the integers $\nu_{I,\tau}\in \{0,1\}$
are defined by:
$$
\nu_{I,\tau}=\begin{cases} \displaystyle 1 \,\,{\rm \textit{if}}\,\begin{cases}  \forall i\notin I, P_{i}^{(\tau)}=\emptyset\\
                                                       \forall \tau'\subset \tau, \exists i\notin I, P_{i}^{(\tau')}\ne\emptyset\quad and\\
{\rm \textit{the\, family}}\,(P_{i}^{\tau})_{i\in I} \,{\rm \textit{is\, essential}}
                                                          \end{cases}\\
0 \,\, {\rm \textit{otherwise}}.
\end{cases}
$$
\end{theo}

The main ingredient of the proof is the following proposition and its corollary, consequence of Sard's theorem:

\begin{prop}\label{p2.1} If $E$ is globally generated, a generic $E$-subscheme is a smooth complete intersection if and only if $E$ is essential, and is empty otherwise.
\end{prop}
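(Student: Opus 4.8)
The plan is to reduce the statement to a genericity argument for sections of a globally generated bundle, using the fact that when $E$ is globally generated, the polytopes $P_i$ all contain the origin-shifted vectors $s_{\sigma,D_i}$, so in each affine chart $U_\sigma\simeq\cp^n$ a generic section is given by a system of $k$ polynomials $f_1^\sigma=\cdots=f_k^\sigma=0$ whose Newton polytopes are the $\Delta_{D_i,\sigma}$. First I would set up the incidence/evaluation map: consider the product $X\times\Gamma(X,E)$ and the subvariety $\mathcal{Z}=\{(x,s): s(x)=0\}$, together with its two projections $\pi_X$ and $\pi_\Gamma$. Since $E$ is globally generated, for each fixed $x\in X$ the evaluation $\Gamma(X,E)\to E_x\cong\cp^k$ is surjective, so the fibre $\pi_X^{-1}(x)$ is a linear subspace of codimension exactly $k$; hence $\mathcal{Z}$ is smooth, irreducible, of codimension $k$, and $\pi_X$ is a submersion. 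One also needs the analogous statement fibrewise over each torus orbit, which follows from the restriction statement in section~\ref{sec:2.1.2}: $L(D_i)$ restricts to a globally generated bundle $L(D_i)^\tau$ on $V(\tau)$ with polytope $P_i^\tau=P_i^{(\tau)}$.

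Next I would apply Sard's theorem (in the Bertini/generic-smoothness form, working over $\cp$) to $\pi_\Gamma:\mathcal{Z}\to\Gamma(X,E)$: for $s$ outside a proper Zariski-closed subset, the scheme $C_s=\pi_\Gamma^{-1}(s)$ is smooth of the expected dimension $n-k$ (or empty). To see this one stratifies $X$ by torus orbits $O_\tau$ and applies the argument on each stratum separately; transversality of $C_s$ with each orbit closure $V(\tau)$ is then automatic, since smoothness of the restriction of $C_s$ to every orbit is exactly the statement that $C_s$ meets every $V(\tau)$ transversally (or not at all). This gives: for $E$ globally generated and a generic section, $C_s$ is a smooth complete intersection of codimension $\min(k, \dim_{\!}(\text{image of the evaluation}))$, with transversal intersection with all orbits — provided it is nonempty.

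It then remains to identify when $C_s$ is actually nonempty (equivalently, pure of codimension $k$) versus empty, and this is where essentiality enters. The key point is a Minkowski-dimension/Bernstein-type count: the generic intersection $\{f_1^\sigma=\cdots=f_k^\sigma=0\}$ in the torus $\mathbb{T}$ is nonempty for generic coefficients precisely when no sub-collection $\{f_i^\sigma\}_{i\in I}$ forces a positive-dimensional common zero locus at infinity that absorbs all solutions — and the combinatorial shadow of that obstruction is exactly the condition $\dim\sum_{i\in I}P_i\ge|I|$ for all $I$, i.e.\ essentiality. Concretely: if the family is not essential, pick a minimal $I$ with $\dim\sum_{i\in I}P_i<|I|$; after a monomial change of coordinates the polynomials $(f_i^\sigma)_{i\in I}$ depend on fewer than $|I|$ of the variables, so their common zero set has no solutions for generic constant terms, forcing $C_s\cap\mathbb{T}=\emptyset$, and a descending induction over the orbit stratification (using that each $P_i^\tau$ is a face of $P_i$, so the restricted family is still non-essential) shows $C_s=\emptyset$. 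Conversely, if the family is essential, the Bernstein--Kushnirenko theorem — or directly the fact that the evaluation map $\Gamma(X,E)\to\bigoplus_i E_x$ can be made surjective onto a rank-$k$ image over a dense open set when no combinatorial degeneracy occurs — yields a nonzero generic mixed volume, hence $C_s\neq\emptyset$, and combined with the Sard argument above, $C_s$ is the asserted smooth complete intersection.

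I expect the main obstacle to be the careful bookkeeping in the ``emptiness'' direction: one must show that non-essentiality of the \emph{global} polytopes propagates to non-essentiality (or vacuity) of the relevant restricted polytopes $P_i^\tau$ on every torus-invariant stratum, so that the generic $E$-subscheme avoids not just the torus but all of $X$. This is a finite but delicate combinatorial induction on the face lattice of the fan, and it is precisely the point where the hypothesis $P_i^\tau=P_i^{(\tau)}$ (valid because $E$ is globally generated) is indispensable — without it the virtual faces could be empty in an uncontrolled way and the stratumwise reduction would break down.
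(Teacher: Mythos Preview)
Your approach is correct and reaches the same conclusion, but the paper's argument is more elementary and avoids several of the tools you invoke. Rather than setting up the incidence variety $\mathcal{Z}\subset X\times\Gamma(X,E)$ and appealing to Bertini/generic smoothness, the paper works directly in a fixed affine chart $U_\sigma$ and observes that essentiality of $(P_1,\ldots,P_k)$ is equivalent to essentiality of $(\Delta_{D_1,\sigma},\ldots,\Delta_{D_k,\sigma})$, which in turn forces $df_1^\sigma\wedge\cdots\wedge df_k^\sigma\not\equiv 0$. Since global generation means $0\in\Delta_{D_i,\sigma}$, the $k$-parameter family $\{f_i^\sigma-\epsilon_i=0\}$ stays inside the space of $E$-subschemes, and ordinary Sard applied to $(f_1^\sigma,\ldots,f_k^\sigma):U_\sigma\to\cp^k$ gives a smooth nonempty fibre for generic $\epsilon$. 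No Bernstein--Kushnirenko is needed for nonemptiness. For the non-essential case, the paper also bypasses your orbit-by-orbit induction: it simply notes that in \emph{every} affine chart $U_\sigma$ the offending sub-family $\{f_i^\sigma\}_{i\in I}$ depends on fewer than $|I|$ coordinates (since the Minkowski-sum dimension is preserved under the affine isomorphism $\phi_\sigma$), so generically there is no common zero in any $U_\sigma$, hence none in $X=\bigcup_\sigma U_\sigma$. Your anticipated ``main obstacle'' therefore dissolves: the chart cover already sees the boundary strata, and no separate face-lattice descent is required. What your route buys is that the incidence variety and the submersion property of $\pi_X$ are exactly what reappear in Proposition~\ref{p2.3}, so you are effectively front-loading that structure; what the paper's route buys is brevity and the avoidance of Bernstein and stratification bookkeeping.
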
 

\begin{proof} For $s=(s_1,\ldots,s_k)\in \Gamma(X,E)$, and  $\sigma\in \Sigma(n)$, we have 
$$
\{s=0\}\cap U_{\sigma}=\{f_1^{\sigma}=\cdots =f_k^{\sigma}=0\}
$$
where the polynomials $f_i^{\sigma}\in\cp[x_1^{\sigma},\ldots  ,x_n^{\sigma}]$ are supported by the convex polytopes $\Delta_{D_i,\sigma}$ associated to the divisor $D_i$. Clearly, the family $(P_{1},\ldots  ,P_{k})$ is essential if and only if the family $(\Delta_{D_1,\sigma},\ldots  ,\Delta_{D_k,\sigma})$ is. 
This is equivalent to that each polytope $\Delta_{D_i,\sigma}$ contains a vector $e_i$ of the canonical basis of $\rp^n$ such that the family $(e_1,\ldots  ,e_k)$ is free. This implies that the differential form  $df_1^\sigma \wedge \cdots \wedge df_k^\sigma$ is generically non zero, since it contains a non zero summand of the form $g^{\sigma}dx_1^{\sigma}\land \cdots\land dx_k^{\sigma}$, $g^{\sigma}\ne 0$. Since the $L_i$ are globally generated, the polytopes $\Delta_{D_i,\sigma}$ contain the origin. Thus, the Zariski closure in $X$ of the affine set define by the polynomials $f_1^\sigma -\epsilon_1,\ldots  , f_k^\sigma -\epsilon_k$ remains an $E$-subscheme which is, by Sard's theorem, a smooth complete intersection in $X$ for generic $\epsilon_i$, $i=1,\ldots,k$. If $E$ is not essential, there is a subset $I=\{i_1,\ldots,i_r\}$ of $\{1,\ldots,k\}$ for which the family $\{f_{i_1}^{\sigma},\ldots,f_{i_r}^{\sigma}\}$ of $r$ polynomials depends on strictly less than $r$ variables in any chart $U_{\sigma}$, so that $C=\{s=0\}$ is generically empty.
\end{proof} 

\begin{rema} 
In the essential globally generated case, $E$-subschemes are not necessary generically irreducible, but are disjoint union of irreducible components. For example, if $E=\mathcal{O}_{\pp^1\times \pp^1}(2,0)$, a generic $E$-subscheme consists of two disjoint lines $\{p_1\}\times \pp^1$ and $\{p_2\}\times \pp^1$.
\end{rema}

\begin{coro}\label{c2.1}
Suppose that $E$ is globally generated. For any $\tau\in \Sigma(r)$ and any generic $E$-subscheme $C$, the intersection $C\cap V(\tau)$ is transversal and define 
a smooth subvariety $C_{\tau}$ of $X$ of codimension $r+k$ if and only if the family  $P_{1}^{\tau},\ldots  ,P_{k}^{\tau}$ is essential, and $C\cap V(\tau)$ is empty otherwise.
\end{coro}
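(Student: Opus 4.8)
The plan is to deduce the statement from Proposition~\ref{p2.1} applied to the toric variety $V(\tau)$ together with the restricted bundle $E^{\tau}:=L_1^{\tau}\oplus\cdots\oplus L_k^{\tau}$, combined with a Bertini/Sard argument ensuring that a generic global section of $E$ on $X$ restricts to a generic section of $E^{\tau}$ on $V(\tau)$. First I would recall from section~\ref{sec:2.1.2} that since $E$ is globally generated, each $L_i$ restricts to a globally generated line bundle $L_i^{\tau}$ on the smooth complete toric variety $V(\tau)$, whose polytope is naturally identified with the face $P_i^{\tau}$ (which equals $P_i^{(\tau)}$ in the globally generated case). Hence $E^{\tau}$ is a globally generated rank-$k$ split bundle on the $(n-r)$-dimensional toric variety $V(\tau)$, and the family $(P_1^{\tau},\ldots,P_k^{\tau})$ is exactly its associated family of polytopes. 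Proposition~\ref{p2.1} then says directly: a generic $E^{\tau}$-subscheme of $V(\tau)$ is a smooth complete intersection of codimension $k$ in $V(\tau)$ — hence of codimension $r+k$ in $X$ — when $(P_1^{\tau},\ldots,P_k^{\tau})$ is essential, and is empty otherwise.

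The remaining point, which I expect to be the main obstacle, is the transition from "generic section of $E^{\tau}$ on $V(\tau)$" to "restriction to $V(\tau)$ of a generic section of $E$ on $X$", and the transversality of $C\cap V(\tau)$ inside $X$ (not merely smoothness of the abstract scheme $C\cap V(\tau)$). For the first: the restriction map $\Gamma(X,L_i)\to \Gamma(V(\tau),L_i^{\tau})$ is surjective — this is the standard fact that the face $P_i^{\tau}$ of the polytope $P_i$ is cut out by lattice points of $P_i$, so every Laurent polynomial supported on $P_i^{\tau}$ extends to one supported on $P_i$ — hence the induced map on products of section spaces is surjective, and the preimage of a Zariski-dense open set is Zariski-dense open. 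So a generic $s\in\Gamma(X,E)$ has $s|_{V(\tau)}$ generic in $\Gamma(V(\tau),E^{\tau})$, and $C\cap V(\tau)=\{s|_{V(\tau)}=0\}$ is a generic $E^{\tau}$-subscheme. For transversality in $X$: I would argue as in the proof of Proposition~\ref{p2.1}, working in an affine chart $U_{\sigma}$ with $\tau\subset\sigma(r)$, where $V(\tau)_{|U_{\sigma}}=\{x_{1,\sigma}=\cdots=x_{r,\sigma}=0\}$ and the restricted polynomials $f_i^{\sigma}|_{V(\tau)}$ are the equations in the coordinates $x_{r+1,\sigma},\ldots,x_{n,\sigma}$. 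The essentiality of $(P_1^{\tau},\ldots,P_k^{\tau})$ gives, in each such chart, free vectors $e_{j_1},\ldots,e_{j_k}$ among $e_{r+1},\ldots,e_n$ with $e_{j_i}\in\Delta_{D_i,\sigma}^{\tau}$, so that $df_1^{\sigma}\wedge\cdots\wedge df_k^{\sigma}$ restricted to $V(\tau)$ has a nonzero summand $g\,dx_{j_1,\sigma}\wedge\cdots\wedge dx_{j_k,\sigma}$; a Sard-type argument on the perturbed system $f_i^{\sigma}-\epsilon_i$ then forces both smoothness and transversality of $C\cap V(\tau)$ for generic $\epsilon$, and gluing over the finitely many charts $U_{\sigma}$ containing $\tau$ gives a global Zariski-dense open set of good sections.

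Finally, when $(P_1^{\tau},\ldots,P_k^{\tau})$ is not essential, the last sentence of the proof of Proposition~\ref{p2.1} applies verbatim to $V(\tau)$: there is a subset $I$ of $\{1,\ldots,k\}$ for which $\{f_i^{\sigma}|_{V(\tau)}\}_{i\in I}$ involves strictly fewer than $|I|$ of the coordinates $x_{r+1,\sigma},\ldots,x_{n,\sigma}$ in every chart, so the restricted system is generically inconsistent and $C\cap V(\tau)=\emptyset$ for generic $s$. This completes the dichotomy. \qed
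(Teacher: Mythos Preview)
Your proof is correct and follows the same approach as the paper: restrict $E$ to the toric subvariety $V(\tau)$ to get a globally generated split bundle with polytopes $P_1^{\tau},\ldots,P_k^{\tau}$, then invoke Proposition~\ref{p2.1}. The paper's proof is in fact just these two sentences; you have additionally spelled out the surjectivity of the restriction map $\Gamma(X,L_i)\to\Gamma(V(\tau),L_i^{\tau})$ (needed to pass from ``generic on $X$'' to ``generic on $V(\tau)$'') and the transversality of $C$ and $V(\tau)$ in $X$, both of which the paper leaves implicit.
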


\begin{proof} The line bundles $L_1,\ldots,L_k$ restricted to $V(\tau)$ define $k$ line bundles $L_1^{\tau},\ldots,L_k^{\tau}$, globally generated on the toric variety $V(\tau)$, with associated polytopes $P_{1}^{\tau},\ldots,P_{k}^{\tau}$. We then apply proposition~\ref{p2.1}.
\end{proof}

\begin{proof}[Proof of Theorem~\ref{t2.1}]  Any $E$-subscheme $C=\{s_1=\cdots=s_k=0\}$, $s_i\in \Gamma(X,L_i)$, $i=1,\ldots,k$ can be written as
\begin{eqnarray*}
C=\bigcap_{i\in \{1,\ldots,k\}}\Big(\{s'_i=0\}\cup BS(L_i)\Big)=\bigcup_{I\subset \{1,\ldots,k\}}\Big(\bigcap_{i\in I}\{s'_i=0\}\bigcap_{i\notin I} {\rm BS}\, (L_i)\Big)
\end{eqnarray*}
where, as in section~\ref{sec:2.2}, $\{s'_i=0\}$ is the mobile part of the divisor $\{s_i=0\}$. A $\mathbb{T}$-subvariety $V(\tau)$ is included in the intersection of base locis $\bigcap_{i\notin I} {\rm BS}\, (L_i)$ if and only if the virtual faces $P_i^{(\tau)}$ are empty for all $i\notin I$. Moreover, there are no bigger $\mathbb{T}$-subvarieties containing such a $V(\tau)$ and included in $\bigcap_{i\notin I} {\rm BS}\, (L_i)$ if and only if for all $\tau'\subset \tau$, there exists $i\notin I$ such that the virtual face $P_i^{(\tau')}$ is not empty. 

Since the $s'_i$ are global sections of a globally generated line bundle $L(D'_i)$ ($D'_i$ is the mobile part of the divisor $D_i$), corollary~\ref{c2.1} implies that 
$$
C_{I,\tau}\,:=\,V(\tau)\,\cap \,\bigcap_{i\in I}\,\{s'_i=0\}
$$ 
is generically a smooth complete intersection of codimension $|I|$ in $V(\tau)$ if the family $(P_i^{\tau})_{i\in I}$ is essential, empty otherwise. Again by corollary~\ref{c2.1}, $C_{I,\tau}$ has a transversal or empty intersection with all orbits included in $V(\tau)$.
\end{proof}

Let us introduce Bernstein's theorem \cite{Ber:gnus} in the picture:

\begin{prop}\label{p2.2}
If $E$ is globally generated and $dim \,V(\tau)= rank\, E=k$, the intersection number $V(\tau)\cdot C$ for a generic $E$-subvariety $C$ is positive, equal to the $k$-dimensional mixed volume
$$
V(\tau)\cdot C=MV_{k}(P_{1}^{\tau},\ldots  ,P_{k}^{\tau}).
$$
\end{prop}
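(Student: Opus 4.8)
The plan is to reduce the computation of $V(\tau)\cdot C$ to an intersection number on the smooth complete toric variety $V(\tau)$ itself, where Bernstein's theorem applies directly. By Proposition~\ref{p2.1} a generic $E$-subscheme is either a smooth complete intersection of codimension $k$ in $X$ or empty; in the latter case $E$, hence also the restricted family $(P_i^\tau)_i$ (whose members are faces of the $P_i$), fails to be essential, both sides of the asserted identity vanish, and there is nothing to prove. So assume $E$ essential, so that a generic $E$-subvariety $C$ has pure codimension $k$. Since $\dim V(\tau)=k$, Corollary~\ref{c2.1} gives that $C\cap V(\tau)$ is transversal; being of the expected dimension $0$ it is a reduced finite scheme (possibly empty), and therefore $V(\tau)\cdot C=\#\bigl(C\cap V(\tau)\bigr)$. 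Writing $C=\{s_1=\cdots=s_k=0\}$ with $s_i\in\Gamma(X,L_i)$ generic, we have $C\cap V(\tau)=\{\bar s_1=\cdots=\bar s_k=0\}$ inside $V(\tau)$, where $\bar s_i$ is the restriction of $s_i$, a global section of the globally generated line bundle $L_i^\tau=L_{i|V(\tau)}$ with polytope $P_i^\tau$ (recall $P_i^{(\tau)}=P_i^\tau$ in the globally generated case, cf.\ section~\ref{sec:2.1.2}).

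Next I would verify that a generic choice of the $s_i$ produces a \emph{generic} tuple $(\bar s_1,\dots,\bar s_k)$ of sections on $V(\tau)$. This rests on the surjectivity of each restriction map $\Gamma(X,L_i)\to\Gamma(V(\tau),L_i^\tau)$: in the dictionary of section~\ref{sec:2.1.1} this map sends a Laurent polynomial supported in $P_i$ to the one obtained by discarding every monomial off the face $P_i^\tau$, and every Laurent polynomial supported in $P_i^\tau$ is so obtained. Hence the image of a Zariski-dense open subset of $\prod_i\Gamma(X,L_i)$ meets every Zariski-dense open subset of $\prod_i\Gamma(V(\tau),L_i^\tau)$.

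Then I would invoke Bernstein's theorem \cite{Ber:gnus} on $V(\tau)$. Applying Corollary~\ref{c2.1} \emph{inside} the $k$-dimensional toric variety $V(\tau)$ (to its orbit closures, all of positive codimension) shows that for generic sections the finitely many points of $\{\bar s_1=\cdots=\bar s_k=0\}$ avoid every proper orbit closure of $V(\tau)$, hence all lie in its big torus $\mathbb T_\tau$; there the $\bar s_i$ are Laurent polynomials with Newton polytopes $P_i^\tau$, so Bernstein's theorem gives
$$
\#\bigl(\{\bar s_1=\cdots=\bar s_k=0\}\cap\mathbb T_\tau\bigr)=MV_k(P_1^\tau,\dots,P_k^\tau).
$$
Combining the three steps yields $V(\tau)\cdot C=MV_k(P_1^\tau,\dots,P_k^\tau)$, which is positive precisely when the family $(P_i^\tau)_i$ is essential (and when it is not, Corollary~\ref{c2.1} gives $C\cap V(\tau)=\emptyset$, in agreement with the vanishing of the mixed volume).

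The one point needing genuine care is this interface between genericity upstairs on $X$ and genericity downstairs on $V(\tau)$, together with the localization of the solutions in the open torus $\mathbb T_\tau$, which is what lets us apply the classical Newton-polytope form of Bernstein's theorem without a separate analysis of solutions on the toric boundary; both follow from the surjectivity of restriction of sections of a globally generated toric line bundle and from Corollary~\ref{c2.1}. Should one wish to bypass this reduction, an alternative is purely cohomological: for generic $s_i$ one has $[C]=c_1(L_1)\cdots c_1(L_k)$ in the Chow ring of $X$, whence $V(\tau)\cdot C=\int_{V(\tau)}c_1(L_1^\tau)\cdots c_1(L_k^\tau)$ by the projection formula, and the standard toric identity expressing the top intersection number of nef line bundles on a $k$-dimensional toric variety as the mixed volume of their polytopes yields the same conclusion.
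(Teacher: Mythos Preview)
Your proof is correct and follows essentially the same route as the paper: restrict the sections to $V(\tau)$, use Corollary~\ref{c2.1} to get transversality and to confine the (finitely many) intersection points to the big torus of $V(\tau)$, and then apply Bernstein's theorem there. The paper carries this out in explicit affine coordinates on a chart $U_\sigma\supset V(\tau)\cap U_\sigma$, restricting the polynomials $f_i^\sigma$ by setting the last $n-k$ variables to zero, whereas you phrase the same step more intrinsically via the surjective restriction $\Gamma(X,L_i)\to\Gamma(V(\tau),L_i^\tau)$; your added care about this surjectivity, about the non-essential case, and the alternative cohomological argument via $c_1(L_1)\cdots c_1(L_k)$ are bonuses but do not change the underlying strategy.
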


\begin{proof}
Choose $\sigma\in \Sigma(n)$ containing $\tau$, such that 
$$
V(\tau)\cap U_{\sigma}=\{x_{k+1,\sigma}=\cdots =x_{n,\sigma}=0\}
$$
and let $f_1^{\sigma}=\cdots =f_k^{\sigma}=0$ be affine equations of $C\cap U_{\sigma}$. Since  the line bundles $L_i^{\tau}$ are globally generated on the toric variety $V(\tau)$, corollary~\ref{c2.1} implies that the intersection $C\cap V(\tau)$ is generically finite, transversal to the orbits included in $V(\tau)$. It is thus included in $V(\tau)\cap U_{\sigma}$ and globally given by polynomials equations
$$
C\cap V(\tau)=\{f_1^{\sigma,\tau}(x_{1}^{\sigma},\ldots  ,x_k^{\sigma})=\cdots 
=f_k^{\sigma,\tau}(x_{1}^{\sigma},\ldots  ,x_k^{\sigma})=0\},
$$
where $f_i^{\sigma,\tau}(x_{1}^{\sigma},\ldots  ,x_k^{\sigma}):=f_i^{\sigma}(x_{1}^{\sigma},\ldots  ,x_k^{\sigma},0,\ldots  ,0)$. The support $\Delta_{i,\sigma}^{\tau}$ of a generic polynomial $f_i^{\sigma,\tau}$ is the subset of $(\rp^+)^{k}\times 0_{(\rp^+)^{n-k}}$, image of the polytope $P_{i}^{\tau}$ under the isomorphism $\phi_{\sigma}:\rp^n\to\rp^n$ and has the same normalized volume. From Bernstein's theorem, we have
$$
{\rm Card} (C\cap V(\tau))={\rm MV}_{k}(\Delta_{1,\sigma}^{\tau},\ldots,\Delta_{k,\sigma}^{\tau})={\rm MV}_{k}(P_{1}^{\tau},\ldots  ,P_{k}^{\tau}).
$$
Since the intersection $C\cap V(\tau)$ is supposed to be transversal, the intersection number $C\cdot V(\tau)$ is equal to ${\rm Card} (C\cap V(\tau))$. 
\end{proof}

Since the classes of $k$-dimensional $\mathbb T$-subvarieties generate the $k$-Chow group $A_{k}(X)$ of algebraic $k$-cycles of $X$ modulo rational equivalence \cite{Fulton:gnus}, the previous proposition permits to compute the intersection number $V\cdot C$ of any $k$-dimensional closed subvariety $V$ of $X$ with a generic $E$-subscheme $C$. 

\begin{coro}\label{c2.2} 
The stricly positivity conditions $V(\tau)\cdot L_1\cdots L_k>0$ are satisfied for any $\tau\in \Sigma(n-k)$ if and only if the bundle $E$ is globally generated, essential, and if the line bundle $L_1\otimes\cdots\otimes L_k$ is very ample.
\end{coro}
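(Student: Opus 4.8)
The statement to prove, Corollary~\ref{c2.2}, characterizes when all the intersection numbers $V(\tau)\cdot L_1\cdots L_k$ for $\tau\in\Sigma(n-k)$ are strictly positive, in terms of three conditions: $E$ globally generated, $E$ essential, and $L_1\otimes\cdots\otimes L_k$ very ample. The natural strategy is to reduce everything to Proposition~\ref{p2.2}, which already evaluates $V(\tau)\cdot C = \mathrm{MV}_k(P_1^\tau,\dots,P_k^\tau)$ when $E$ is globally generated and $\dim V(\tau)=k$ (note $\tau\in\Sigma(n-k)$ gives exactly $\dim V(\tau)=k$). So first I would observe that, once $E$ is globally generated, the intersection numbers in question are precisely these mixed volumes, and the content of the corollary becomes: \emph{all} mixed volumes $\mathrm{MV}_k(P_1^\tau,\dots,P_k^\tau)$ are positive $\iff$ $E$ essential and $L_1\otimes\cdots\otimes L_k$ very ample. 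This splits the proof into handling the ``global generation'' hypothesis separately and then analyzing the mixed-volume positivity.

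The first reduction I would carry out is to show that global generation of $E$ is forced by the positivity conditions. If some $L_i$ is not globally generated, then by section~\ref{sec:2.1.2} there is a cone $\tau$ (indeed a maximal cone $\sigma$, where $V(\sigma)$ is a point) whose virtual face is empty, i.e. $V(\sigma)\subset BS(L_i)$; one then propagates this to an $(n-k)$-dimensional face to produce a $\tau\in\Sigma(n-k)$ with $V(\tau)\subset BS(L_i)$, forcing the generic $E$-subscheme to miss $V(\tau)$ and hence $V(\tau)\cdot L_1\cdots L_k=0$. (More cleanly: $V(\tau)\cdot L_i = 0$ as a curve class paired with a base-point-free-away-from-$V(\tau)$ bundle... but the subscheme argument via Proposition~\ref{p2.2} and Corollary~\ref{c2.1} is the one consistent with the paper's framework.) This justifies assuming $E$ globally generated from now on, so Proposition~\ref{p2.2} applies and $V(\tau)\cdot L_1\cdots L_k = \mathrm{MV}_k(P_1^\tau,\dots,P_k^\tau)$.

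Next, for the mixed-volume analysis I would use the standard fact that $\mathrm{MV}_k(Q_1,\dots,Q_k)>0$ if and only if there exist line segments $S_i\subseteq Q_i$ (after translation) whose directions are linearly independent; equivalently $\dim(\sum_{i\in I}Q_i)\ge|I|$ for every $I\subseteq\{1,\dots,k\}$ — which is exactly essentiality of the family $(Q_i)$. Applying this with $Q_i=P_i^\tau$: the positivity of $V(\tau)\cdot L_1\cdots L_k$ for a \emph{fixed} $\tau$ is equivalent to essentiality of $(P_1^\tau,\dots,P_k^\tau)$. Now I would run over all $\tau\in\Sigma(n-k)$ and argue that ``$(P_1^\tau,\dots,P_k^\tau)$ essential for all $\tau\in\Sigma(n-k)$'' is equivalent to the conjunction ``$E$ essential'' (take $\tau=\{0\}$, giving $P_i^{\{0\}}=P_i$, whose essentiality is essentiality of $E$) plus a positivity statement at each boundary toric curve $V(\tau)$. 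The remaining point is to identify this boundary condition with very ampleness of $L:=L_1\otimes\cdots\otimes L_k$: by superadditivity of mixed volumes in each slot, $\mathrm{MV}_k(P_1^\tau,\dots,P_k^\tau)\le \mathrm{MV}_k(P^\tau,\dots,P^\tau)=k!\,\mathrm{vol}_k(P^\tau)$ where $P^\tau$ is the face of the polytope $P$ of $L$ associated to $\tau$, and conversely a lower bound holds since each $P_i^\tau$ is a Minkowski summand of $P^\tau$; one then uses that $L$ very ample $\iff$ $P$ is a very ample polytope $\iff$ every edge $P^\tau$ ($\tau\in\Sigma(n-1)$, but here we need the $(n-k)$-faces) is nondegenerate and, together with essentiality on higher-codimensional faces, yields $k$-dimensionality of each $P^\tau$ with $k$ independent edge directions among the $P_i^\tau$. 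The careful bookkeeping here — translating the geometric very-ampleness of $L$ into the statement that the \emph{tuple} of face-polytopes $(P_i^\tau)_i$ is essential for every $\tau\in\Sigma(n-k)$, and checking the two implications match exactly with no slack — is the main obstacle; in particular one must be sure that essentiality of $E$ plus very ampleness of the product really does give essentiality of every restricted tuple, which uses that $P_i^\tau$ spans the same affine space as the intersection of the corresponding face of $P$ with a suitable coordinate subspace. Once that dictionary between $\{$mixed volume $>0$ at every $\tau\}$ and $\{$essentiality $+$ very ampleness$\}$ is established, the corollary follows by combining it with the first reduction.
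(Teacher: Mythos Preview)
Your strategy coincides with the paper's through the penultimate step: reduce via Proposition~\ref{p2.2} to positivity of the mixed volumes $\mathrm{MV}_k(P_1^\tau,\ldots,P_k^\tau)$, invoke the standard fact (the paper cites \cite{Ewald:gnus}) that a mixed volume of $k$ polytopes in $\rp^k$ is strictly positive iff the family is essential, and then translate ``$(P_i^\tau)_i$ essential for every $\tau\in\Sigma(n-k)$'' into very-ampleness of $L_1\otimes\cdots\otimes L_k$. The gap is in this last translation. Your proposed route---monotonicity bounds on mixed volumes plus the remark that each $P_i^\tau$ is a Minkowski summand of $P^\tau$---does not deliver what you need: being a summand gives no positive lower bound on $\mathrm{MV}_k(P_1^\tau,\ldots,P_k^\tau)$ in terms of $\mathrm{vol}_k(P^\tau)$ (take $P_1^\tau$ a single point and the remaining $P_i^\tau$ full-dimensional; then $P^\tau$ has positive volume but the mixed volume vanishes). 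You yourself flag this bookkeeping as the ``main obstacle'' and leave it unresolved.

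The paper's route is shorter and bypasses the difficulty: pass at once to the single polytope $P:=P_1+\cdots+P_k$ of $D:=D_1+\cdots+D_k$, observe that essentiality of every restricted family $(P_i^\tau)_i$ is equivalent to every face $P^\tau$ having the maximal possible dimension $k$, and recognize this as exactly the combinatorial criterion for very-ampleness of $L(D)=L_1\otimes\cdots\otimes L_k$ (namely that $P-s_{\sigma,D}$ contains a basis of $\check\sigma\cap\zp^n$ for each $\sigma\in\Sigma(n)$, cf.\ \cite{Fulton:gnus}). Essentiality of $E$ itself then falls out automatically and need not be treated separately. As for your first reduction---that strict positivity forces global generation of each $L_i$---the paper does not argue this point explicitly either, and your propagation from a torus-fixed base point $V(\sigma)$ to a $k$-dimensional stratum $V(\tau)\subset BS(L_i)$ is not justified as written.
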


\begin{proof}
The $k$-dimensional mixed volume of a family of $k$ polytopes in $\rp^k$ is strictly positive if and only if the family is essential (see \cite{Ewald:gnus}). Essentiality of the families $P_{1}^{\tau},\ldots  ,P_{k}^{\tau}$ for all $\tau\in \Sigma(n-k)$ is equivalent to that any face $P^{\tau}$ of the Minkowski sum $P:=P_1+\cdots+P_k$ has maximal dimension $k$. In the globally generated case, the polytope $P=P_D$ corresponds to the divisor $D=D_1+\cdots+D_k$. It satisfies the previous condition if and only if the translated polytope $P_D-s_{\sigma,D}$ contains the basis of the $\zp$-free module $\check{\sigma}\cap \zp^n$ for all $\sigma\in \Sigma(n)$. This holds if and only if the associated line bundle $L(D)=L_1\otimes\cdots\otimes L_k$ is very ample, see \cite{Fulton:gnus}. 
\end{proof}

A vector bundle $E$ which satisfies hypothesis of corollary~\ref{c2.2} is called {\it very ample}. For instance, the bundle
$E=\mathcal{O}_{\pp^1\times\pp^1\times\pp^1}(1,0,0)\oplus \mathcal{O}_{\pp^1\times\pp^1\times\pp^1}(0,1,1)$ is very ample on $X=\pp^1\times\pp^1\times\pp^1$.

We now use theorem~\ref{t2.1} and proposition~\ref{p2.1} to extend the classical notion of $(n-k)$-concavity in the projective space to an intrinsic notion of $E$-concavity in the smooth, complete toric variety $X$.

\subsection{$E$-concavity}\label{sec:2.3}

We call the parameter space for $E$-subvarieties the product of projective spaces   
$$
X^{'}=X^{'}(L_1,\ldots  ,L_k):=\pp(\Gamma(X,L_1))\times\cdots \times\pp(\Gamma(X,L_k)),
$$ 
equipped with the multi-homogeneous coordinates $a=(a_1,\ldots  ,a_k)$, where 
$$
a_i=(a_{im})_{m\in P_i\cap \zp^n}\in \pp(\Gamma(X,L_i))
$$
are the natural homogeneous coordinates for divisors in $|L_i|$. Thus any $a$ in $X^{'}$ determines the $E$-subvariety $C_a$ whose restriction to the torus $\mathbb{T}\subset X$ has Laurent polynomial equations 
$$
C_a\cap \mathbb{T}=\{l_1(a_1,t)=\cdots = l_k(a_k,t)=0\}.
$$
where $l_i(a_i,t)=\sum_{m\in P_i\cap \zp^n} a_{im}t^m$, for $i=1,\ldots,k$.

\begin{defi}
An open set $U$ of $X$ is called $E$-concave if any point of $U$ belongs to an $E$-subvariety included in $U$. The $E$-dual space of an $E$-concave set $U$ is the subset 
$$
U^{'}:=\{a\in X^{'}, C_a\subset U\}\subset X^{'}.
$$
The $E$-incidence variety over $U$ is the analytic subset of $U\times U^{'}$
$$ 
I_U:=\{(x,a)\in U \times U^{'}, x\in C_a\}
$$
equipped with its natural projections $p_U$ and $q_U$ on respectively $U$ and $U^{'}$. We note ${\rm Reg}(U^{'})$ the set of regular points $a\in U^{'}$, for which the subvariety $C_a$ is a smooth complete intersection.
\end{defi}

Certainly, $E$-concave open sets need not exist without any restrictive hypothesis on the algebraic vector bundle $E$. However, we know from section~\ref{sec:2.2} that ${\rm Reg}(X^{'})$ is an open Zariski subset in $X^{'}$ if $E$ is globally generated and essential. Next proposition shows that this condition is necessary and sufficient to be in the general situation described in \cite{Fabre:gnus} to generalize the Abel-transform. 

\begin{prop}\label{p2.3} A. These three assertions are equivalents:  
\begin{enumerate}
\item $E$ is globally generated; 
\item $I_X=I_X(E)$ is a bundle on $X$ in a product of projective spaces $\pp^{l_1-2}\times\cdots \times\pp^{l_k-2}$, where $l_i={\rm Card} (P_i\cap \zp^n)$, $i=1,\ldots,k$;
\item $I_X=I_X(E)$ is a smooth irreducible complete intersection in $X\times X^{'}$ and the morphism $p_X:  I_X\rightarrow X$ is a holomorphic submersion. 
\end{enumerate}
B. If $E$ is globally generated, then ${\rm Reg}\, (X^{'})$ is a non empty Zariski open set of $X^{'}$ if and only if $E$ is essential. In that case, the map $q_X : I_X\rightarrow X^{'}$ is holomorphic, proper, surjective and defines a submersion over ${\rm Reg}\, (X^{'})$. Moreover, the $E$-dual set $U^{'}$ of an $E$-concave open set $U\subset X$ is open, non empty, and connected if $U$ is.
\end{prop}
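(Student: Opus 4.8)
The plan is to prove Part~A by the cycle $(1)\Rightarrow(2)\Rightarrow(3)\Rightarrow(1)$, and then to read off Part~B from Part~A together with Proposition~\ref{p2.1} and soft properties of the two projections $p_X,q_X$.

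For $(1)\Rightarrow(2)$: fix $x\in X$; since $L_i$ is globally generated the evaluation map $\Gamma(X,L_i)\to L_i(x)$ is onto, so its kernel is a hyperplane $H_{i,x}$ of $\pp(\Gamma(X,L_i))$, i.e.\ a $\pp^{l_i-2}$, and $p_X^{-1}(x)=H_{1,x}\times\cdots\times H_{k,x}$. As $x$ varies, the $H_{i,x}$ are the projectivised fibres of the rank $l_i-1$ sub-bundle $\ker(X\times\Gamma(X,L_i)\twoheadrightarrow L_i)$ of the trivial bundle, so $I_X$ is the fibre product over $X$ of the corresponding projective bundles, hence a locally trivial bundle over $X$ with fibre $\pp^{l_1-2}\times\cdots\times\pp^{l_k-2}$. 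The step $(2)\Rightarrow(3)$ is then formal: this bundle is smooth and irreducible (as $X$ and the fibre are), $p_X$ is its structure morphism hence a submersion, and $I_X$ is cut out in $X\times X'$ by the $k$ universal equations $l_i(a_i,\cdot)=0$; by $(2)$ its codimension there equals $\sum_i(l_i-1)-\sum_i(l_i-2)=k$, so these $k$ equations form a regular sequence and $I_X$ is a complete intersection. For $(3)\Rightarrow(1)$: if $E$ is not globally generated, some $L_{i_0}$ has a base point $x_0$, so $x_0\in C_a$ for every $a\in X'$ whatever its $i_0$-th component; hence $\dim p_X^{-1}(x_0)$ strictly exceeds $\dim p_X^{-1}(x)$ for $x$ outside all base loci $\mathrm{BS}(L_i)$, contradicting that $p_X$, being a submersion on the pure-dimensional $I_X$, has equidimensional nonempty fibres.

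For Part~B, assume first $E$ globally generated. The locus $\mathrm{Reg}(X')$ is the set of $a$ for which $C_a$ is a smooth complete intersection; since ``smooth complete intersection of the expected dimension $n-k$'' is a Zariski-open condition on $a$, Proposition~\ref{p2.1} shows this locus is a nonempty Zariski-open exactly when $E$ is essential (when $E$ is not essential the generic $C_a$ is empty, so the locus is empty, hence not a nonempty Zariski-open). Assume now $E$ also essential. Then $q_X$ is holomorphic and proper, since $X$ is complete and $I_X$ is closed in $X\times X'$; it is surjective because its image is closed (properness) and contains the Zariski-dense set $\mathrm{Reg}(X')$, along which $C_a\ne\emptyset$. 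It is a submersion over $\mathrm{Reg}(X')$: near a point $(x_0,a_0)$ of $I_X$ with $a_0\in\mathrm{Reg}(X')$, local equations of $I_X$ are $f^{\sigma}_1(x;a)=\cdots=f^{\sigma}_k(x;a)=0$, and smoothness of $C_{a_0}$ forces the $x$-Jacobian $(\partial f^{\sigma}_i/\partial x_j)(x_0;a_0)$ to have rank $k$, so for each $w\in T_{a_0}X'$ one can solve for $v$ with $(v,w)\in T_{(x_0,a_0)}I_X$, i.e.\ $dq_X$ is onto.

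Finally let $U\subset X$ be $E$-concave. Since $a\notin U'\iff C_a\cap(X\setminus U)\ne\emptyset\iff a\in q_X(p_X^{-1}(X\setminus U))$, and $p_X^{-1}(X\setminus U)$ is closed in $I_X$ while $q_X$ is proper, we get $U'=X'\setminus q_X(p_X^{-1}(X\setminus U))$ open; and $U'$ is nonempty because any $x\in U$ lies, by $E$-concavity, on some $C_a\subset U$. For connectedness of $U'$ when $U$ is connected I would prove $I_U:=q_X^{-1}(U')=\{(x,a):a\in U',\ x\in C_a\}$ connected (then $U'=q_X(I_U)$ is connected), using that $p_U:=p_X|_{I_U}:I_U\to U$ is surjective (again by $E$-concavity) and open (restriction of the submersion $p_X$ to the open subset $I_U$ of $I_X$), and controlling its fibres $p_U^{-1}(x)=\{a:x\in C_a\subset U\}$, which are open subsets of the connected fibre $\pp^{l_1-2}\times\cdots\times\pp^{l_k-2}$ of $p_X$. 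I expect this last point to be the main obstacle: unlike in $\pp^n$ one has no ambient pencil at one's disposal, and the fibres of $p_U$ need not themselves be connected for an arbitrary $E$-concave $U$, so the argument must genuinely use $E$-concavity --- the fact that the whole family of $E$-subvarieties contained in $U$ already covers $U$ --- rather than merely the bundle structure of $I_X$ (for instance, through a monodromy argument showing $\pi_1(U)$ acts transitively on the components of the fibres of $p_U$).
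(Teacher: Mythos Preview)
Part~A is the paper's argument: your kernel-of-evaluation description of $p_X^{-1}(x)$ is dual to the paper's Kodaira map $\Phi=(\Phi_{L_1},\ldots,\Phi_{L_k}):X\to\pp(V_1^*)\times\cdots\times\pp(V_k^*)$, under which $I_X$ is identified with the pullback of the tautological point--hyperplane bundle; the implications $(2)\Rightarrow(3)\Rightarrow(1)$ are then proved identically.

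For the openness of $U'$ in Part~B you take a different and cleaner route. The paper first argues fibre by fibre that $p_U^{-1}(x)$ is open in $p_X^{-1}(x)$, infers that $I_U$ is open in $I_X$, and then writes ``since $q_X$ is holomorphic, it is open'' to conclude that $U'=q_X(I_U)$ is open. Your single line $X'\setminus U'=q_X\big(p_X^{-1}(X\setminus U)\big)$, closed by properness of $q_X$, bypasses both of these steps and avoids the imprecise ``holomorphic $\Rightarrow$ open'' appeal.

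On connectedness you have located exactly the soft spot. The paper's entire argument is the parenthesis ``so is $I_U$ (because of its bundle structure)'', i.e.\ precisely the assertion you decline to make: the map $p_U:I_U\to U$ has as fibres only \emph{open subsets} of $\pp^{l_1-2}\times\cdots\times\pp^{l_k-2}$, so $I_U$ is not a fibre bundle over $U$, and nothing further is offered. Your caution is warranted; indeed already for $X=\pp^2$, $E=\mathcal{O}(1)$ and $U$ a thin connected neighbourhood of the union of two distinct lines $\ell_0\cup\ell_1$, the dual set $U'\subset(\pp^2)^*$ breaks into two components near $[\ell_0]$ and $[\ell_1]$, so neither the paper's ``bundle'' claim nor your proposed monodromy repair can go through for arbitrary connected $E$-concave $U$. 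On this point you have gone further than the paper by recognising a genuine difficulty rather than a formality.
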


\begin{proof}
We show part A. For $i=1,\ldots,k$, let $V_i:=\Gamma(X,L_i)$ and $V_i^*$ its dual. We can define the tautological projective bundle ``points-hyperplanes'' over the projectivised space $\pp(V_i^*)$ of $V_i$
$$
T_i=\{(P,H)\in \pp(V_i)\times \pp(V_i^*)\,;\, P\in H\}
$$
where $\pp(V_i)\simeq \pp^{l_i-1}$. The product bundle $T=T_1\times\cdots \times T_k$ over $\pp(V_1^*)\times \cdots \times \pp(V_k^*)$ is a bundle in a product of projectives spaces isomorphic to  $\pp^{l_1-2}\times\cdots \times\pp^{l_k-2}$. If assertion $(1)$ is true, the morphism
\begin{eqnarray*}
\Phi=(\Phi_{L_1},\ldots  ,\Phi_{L_k}): X &\longrightarrow & \pp(V_1^*)\times \cdots \times \pp(V_k^*)
\end{eqnarray*} 
is well-defined, where, for $i=1\ldots, k$, $\Phi_{L_i}$ is the Kodaira map which sends $x$ to the point in $\pp(V_i^*)$ representing the hyperplane in $V_i$ of sections of $L_i$ vanishing at $x$. The triple $(I_X,X,p_X)$ is the pull-back bundle $\Phi^*(T)$ on $X$ which shows $(1)\Rightarrow (2)$. A projective bundle on a smooth irreducible variety $X$ is smooth irreducible and the projection on $X$ is a submersion. The variety $I_X$ is locally given by $k$ affine equations and is thus a complete intersection for dimension reasons which shows $(2)\Rightarrow (3)$. 
If a line bundle $L_i$ is not globally generated, the fiber $p_X^{-1}(x)$ over any $x$ in the base locus $BS(L_i)\ne\emptyset$ has codimension strictly less than $k$ in $\{x\}\times X^{'}$, contradicting $(3)$. So $(3)\Rightarrow (1)$, which completes the proof of $A$.

Let us show part B. The map $q_X$ is holomorphic, proper (since $X$ is compact) and surjective by construction. Proposition~\ref{p2.1} implies that ${\rm Reg}\,(X^{'})$ is a non empty Zariski open set of $X^{'}$ if and only if $E$ is essential. In that case, the fiber $q_X^{-1}(a)=C_a\times \{a\}$ over $a\in {\rm Reg}\,(X^{'})$ is smooth and the implicit function theorem implies that in a neighborhood of $(x,a)\in I_X$, the triple $(I_X,q_X,X^{'})$ is diffeomorphic to the triple $(W_x\times U_a,q,U_a)$ where $W_x$ is an open subset of $\cp^{n-k}$, $U_{a}$ is an open neighborood of $a$ and $q:W_x\times U_{a}\to U_a$ is the second projection. 

It remains to show that $U^{'}$ is open in $X^{'}$. Since $p_X:I_X \to X$ is a submersion, $I_U$ is open in  $I_X$ if and only if for all $x$ in $U$, the fiber $p_U^{-1}(x)=p_X^{-1}(x)\cap I_U$ is open in $p_X^{-1}(x)$. Let $F$ be the closed subset $X\setminus U$. We define for $x\in U$ the set
$$
W_x=\{(z,a)\in F\times q_X(p_X^{-1}(x)), z\in C_a\}\subset p_X^{-1}(x)\subset I_X. 
$$
The set $q_X(p_X^{-1}(x))$, isomorphic to a product of projectives spaces, is closed in $X^{'}$;   
the condition $z\in C_a$ is closed and the set $W_x$ is therefore closed in
$p_X^{-1}(x)\subset I_X$;  each fiber $p_U^{-1}(x)=p_X^{-1}(x)\setminus W_x$ 
is hence open in $p_X^{-1}(x)$ and $I_U=p_U^{-1}(U)$ is open in $p_X^{-1}(U)$, so in $I_X$. Since $q_X$ is holomorphic, it is open and the set $U^{'}=q_X(I_U)$ is open in $X^{'}$. If $U$ is connected, so is $I_U$ (because of its bundle structure), and so is $U^{'}= q_U(I_U)$ since $q_U$ is continuous.
\end{proof}

\subsection{$E$-duality}\label{sec:2.4}

From now on, we assume that $E=L_1\oplus\cdots\oplus L_k$ is an essential and globally generated vector bundle over $X$. 

\subsubsection{$E$-dual sets}\label{sec:2.4.1}

To any analytic subset $V$ in an $E$-concave open set $U$ of $X$, we associate (set theoretically) its $E$-dual set
$$
V^{'}:=q_U(p_U^{-1}(V))=\{a\in U^{'}, C_a\cap V \ne \emptyset\}.
$$
We define the incidence set over $V$
$$
I_V:=p_U^{-1}(V)=\{(x,a)\in V \times U^{'}, x \in C_a\}
$$
and note $p_V$ and $q_V$ the natural projections onto $V$ and $V^{'}$. 

\begin{prop}\label{p2.4} If $U$ is open $E$-concave, then
\begin{enumerate} 
\item The dual $V^{'}$ of a closed analytic set $V\subset U$ is closed analytic 
in $U^{'}$. Moreover, for any $\alpha \in V^{'}$, we have 
$$
codim_{\alpha} (V^{'})= k- r+ min\{ dim(V \cap C_a), a \in V^{'}\,  near\, \alpha\},
$$
where $r$ is the maximum of the dimensions of the irreducible components of $V$ meeting $C_{\alpha}$.
\item If $V$ is irreducible and if there exists $a\in {\rm Reg}\, (U^{'})$ such that the intersection 
$V\cap C_a$ is proper, its dual $V^{'}$ is irreducible of pure codimension
$$
codim (V^{'})= \begin{cases} \displaystyle k- dim (V), \quad  if\  dim (V) < k \\
0 \ otherwise.
\end{cases}
$$    
\end{enumerate} 
\end{prop}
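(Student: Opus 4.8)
The plan is to recognise $V'$ as the image of the incidence set $I_V=p_U^{-1}(V)$ under the proper holomorphic map $q_V:=q_{U}|_{I_V}$, and then to deduce both parts from the fibre‑bundle structure of $p_U$ (Proposition~\ref{p2.3}), Remmert's proper mapping theorem, and the theorem on the dimension of fibres. First I would record that, $V$ being closed analytic in $U$ and $p_U$ holomorphic, $I_V=p_U^{-1}(V)$ is closed analytic in $I_U$. Next I would check that $q_U:I_U\to U'$ is proper: if $K\subset U'$ is compact it is compact in $X'$, so $q_X^{-1}(K)$ is compact by Proposition~\ref{p2.3}.B; moreover for $(x,a)\in q_X^{-1}(K)$ one has $a\in K\subset U'$, hence $C_a\subset U$ and $x\in C_a\subset U$, so that $q_X^{-1}(K)\subset I_U$ and therefore $q_U^{-1}(K)=q_X^{-1}(K)$ is compact. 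Restricting the proper map $q_U$ to the closed set $I_V$ shows $q_V$ is proper, and Remmert's theorem gives that $V'=q_V(I_V)$ is a closed analytic subset of $U'$.

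For the dimension count, I would use that by Proposition~\ref{p2.3}.A the map $p_U:I_U\to U$ is a holomorphic bundle with fibre $F=\pp^{l_1-2}\times\cdots\times\pp^{l_k-2}$, a connected compact manifold of dimension $d-k$ with $d:=\dim X'=\sum_i(l_i-1)$. Hence $I_V$ is a bundle over $V$ with the same fibre $F$: its irreducible components are the $W_j:=p_U^{-1}(V_j)$ for $V_j$ the irreducible components of $V$, they are irreducible, and $\dim_{(x,a)}I_V=\dim_x V+(d-k)$. The fibre of $q_V$ over $a$ is $q_V^{-1}(a)=(V\cap C_a)\times\{a\}$, of dimension $\dim(V\cap C_a)$. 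Applying the theorem on fibre dimensions to the proper surjective holomorphic map $q_V|_{W_j}:W_j\to V_j'$ (upper semicontinuity of the fibre dimension, together with $\dim(\mathrm{image})=\dim(\mathrm{source})-(\text{generic fibre dimension})$) yields that each $V_j'$ is irreducible of dimension $\dim V_j+(d-k)-e_j$ with $e_j:=\min\{\dim(V_j\cap C_a):a\in V_j'\}$. Since each $V_j'$ is closed and, near $\alpha$, $V'=\bigcup\{V_j':V_j\cap C_\alpha\neq\emptyset\}$, one gets $\dim_\alpha V'=(d-k)+\max_j(\dim V_j-e_j)$ over these $j$, and it remains to match this with $(d-k)+r-\min\{\dim(V\cap C_a):a\in V'\text{ near }\alpha\}$, which after subtracting from $d$ is the claimed formula.

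For part (2), if $V$ is irreducible then $I_V=p_U^{-1}(V)$ is irreducible, being the preimage of an irreducible set under the locally trivial fibration $p_U$ with connected fibre; so its continuous image $V'$ is irreducible, hence by the first part an irreducible closed analytic subset of $U'$, of pure codimension $k-\dim V+e$ by the previous paragraph, where $e=\min\{\dim(V\cap C_a):a\in V'\}$ is the generic fibre dimension of $q_V$. The hypothesis that $V\cap C_a$ is proper for some $a\in\mathrm{Reg}(U')$ pins $e$ to its expected value $\max(\dim V-k,0)$: upper semicontinuity gives $e\le\dim(V\cap C_a)=\max(\dim V-k,0)$, while for $\dim V\ge k$ the reverse inequality $e\ge\dim V-k$ holds because each $C_a$ is locally the common zero set of $k$ holomorphic functions on $X$, so $\dim(V\cap C_a)\ge\dim V-k$ wherever non empty. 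This gives $\mathrm{codim}(V')=0$ for $\dim V\ge k$ and $\mathrm{codim}(V')=k-\dim V$ for $\dim V<k$.

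The hard part is the combinatorial bookkeeping in the second paragraph for reducible $V$: one has to show that the component of $V'$ through $\alpha$ of maximal dimension corresponds to a component $V_{j_0}$ with $\dim V_{j_0}$ maximal among those meeting $C_\alpha$ (so that the "$r$" appearing there is the right one), and that the minimum defining $\min\{\dim(V\cap C_a):a\in V'\text{ near }\alpha\}$ can be attained at a parameter $a$ chosen in the generic locus of $V_{j_0}'$ and outside the strictly smaller dual components $V_j'$. This needs a careful use of the upper semicontinuity of $a\mapsto\dim(V\cap C_a)$ on $V'$ and of the irreducibility of each $V_j'$ proved above; everything else in both statements is a formal consequence of the bundle structure of $p_U$ and of Remmert's theorem.
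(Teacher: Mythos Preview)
Your approach is essentially the paper's: both arguments use the bundle/submersion structure of $p_U$ from Proposition~\ref{p2.3} to control $I_V$, Remmert's proper mapping theorem to conclude that $V'$ is closed analytic (and irreducible when $V$ is), and the theorem on fibre dimensions of $q_V$ to obtain the codimension formula. The only real difference is presentational: where you decompose $V$ into irreducible components $V_j$, compute each $\dim V_j'$, and then worry about the ``combinatorial bookkeeping'' of reassembling, the paper bypasses this by applying the pointwise fibre-dimension formula $\dim_{(x,a)}(q_V)=\dim_{(x,a)}I_V-\dim_x(V\cap C_a)=\dim X'-(k-\dim_x V+\dim_x(V\cap C_a))$ directly and then maximising over $(x,a)$ with $a$ near $\alpha$; this yields the stated expression in one step (with, admittedly, the same mild hand-wave about separating $\max\dim_x V=r$ from $\min_a\dim(V\cap C_a)$ that you explicitly flag).
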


\begin{proof}
$1$. Since $p_U$ is a submersion, $I_V$ is analytic closed in $I_U$ of codimension $n-{\rm dim}(V)$, irreducible if and only if $V$ is. The projection $q_U: I_U \rightarrow U^{'}$ is holomorphic, proper and the proper mapping theorem \cite{GR:gnus} implies that $V^{'}=q_U(I_V)$ is analytic closed in $U^{'}$, irreducible if $I_V$ is. Moreover, for any $\alpha \in V^{'}$:  
\begin{eqnarray*} 
{\rm dim}_{\alpha} (V^{'})=\max \{{\rm dim}_{(x,a)} (q_V),  
(x,a)\in I_V, a \in V^{'}\, {\rm near} \,\alpha\}
\end{eqnarray*}
where ${\rm dim}_{(x,a)}(q_V):={\rm dim}_{(x,a)} (I_V) - {\rm dim}_{(x,a)} (q_V^{-1}(a))$ is the codimension at 
$(x,a)$ in $I_V$ of the fiber $q_V^{-1}(a)={V\cap C_a}\times \{a\}$. On the other hand, since $p_U:I_U\to U$ is a submersion, we have ${\rm codim}_{I_U,(x,a)}(I_V)={\rm codim}_{U, x} (V)$  for any $(x,a)$ in $I_V$,  so that
\begin{eqnarray*} 
{\rm dim}_{(x,a)}(q_V) &=& {\rm dim}_{(x,a)} (I_U)-{\rm codim}_{U, x} (V)-{\rm dim}_x (V\cap C_a)\\
&=& n+{\rm dim} (X^{'}) - k-(n-{\rm dim}_x(V))-{\rm dim}_x (V\cap C_a)\\
&=& {\rm dim} (X^{'}) - (k-{\rm dim}_x (V)+{\rm dim}_x (V\cap C_a)).
\end{eqnarray*}
The maximum of the dimensions ${\rm dim}_x (V)$ for $x\in V\cap C_a$ when $a$ runs an arbitrary small open neighborhood $U_{\alpha}\subset U$ of $\alpha$ is the maximum of the dimensions of the irreducible components of $V$ meeting $C_{\alpha}$, which only depends of $\alpha$. Consequently, if $U_{\alpha}^*$ is small enough, the chosen definition of $r$ implies the equality
$$
{\rm dim}_{\alpha} (V^{'})= {\rm dim} (X^{'}) -(k-r+ \min \{{\rm dim} (V\cap C_a), a \in U_{\alpha}^*\cap V^{'}\} ), 
$$
which ends the proof of the first point.  
\vskip 1mm
\noindent
$2$. If $C_{a}$ and $V$ intersect properly for $a\in {\rm Reg}(U^{'})$, then
$$
{\rm dim} (V\cap C_{a}) = \begin{cases} {\rm dim} ( V )+ {\rm dim} (C_{a})-n \ {\rm if} \ {\rm dim} (V) \geq k \\
 0 \ {\rm if}\ {\rm dim} (V) < k\,.
\end{cases}  
$$
From the first point, we then have
$$
{\rm codim}_{a}\, (V^{'}) = \begin{cases} k-{\rm dim}  (V)+{\rm dim} (V) + {\rm dim} (C_{a})-n = 0 \ {\rm if}\ {\rm dim} (V) \geq k \\
k- {\rm dim} (V) \ {\rm if}\ {\rm dim} (V) < k \,.
\end{cases}
$$
Since $V$ is assumed irreducible, $V^{'}$ is irreducible, and ${\rm codim} (V^{'})={\rm codim}_{a}  (V^{'})$ which ends the proof.
\end{proof}

We remark that $E$-dual sets have a particular structure: if $V$ is closed analytic in an $E$-concave open set $U\subset X$, its dual $V^{'}=\cup_{x\in V} \, q_U(p_U^{-1}(x))$ is a union of products of projectives hyperplanes $\pp^{l_1-2}\times\cdots \times \pp^{l_k-2}\subset X^{'}$ restricted to $U^{'}$.

The following example illustrates proposition~\ref{p2.4}:
\begin{exem}
Let $X=\pp^1\times \pp^1\times\pp^1$, with natural multi-homogeneous coordinates $[x_0:x_1],[y_0:y_1],[z_0:z_1]$. Consider the essential globally generated bundle $E=\mathcal{O}_X(2,0,0)$. Then $X^{'}=X^{'}(E)$ is isomorphic to $\pp^2$ equipped with its natural homogeneous coordinates $[a_0:a_1:a_2]$, with the representation $C_a=\{p \in X\,;\, a_0x_0^2+a_1x_1^2+a_2 x_0x_1=0\}$. For $V=\{x_0=0\}$, the intersection $C_a\cap V$ is empty if $a_1\ne 0$ 
and $C_a\cap V=\{0\}\times \pp^1\times\pp^1$ otherwise. Thus, $V^{'}=\{a_1=0\}$ and the minimum of the dimension of $V\cap C_a$ for $a\in V^{'}$ near any points $[a_0:0:a_2]\in V^{'}$ is two and ${\rm codim}_{[a_0:0:a_2]} (V^{'})= 1-2+ 2=1$ as predicted by proposition~\ref{p2.4}.
\end{exem}

In this example, while ${\rm dim} (V)+{\rm dim}(C_a) = 4 > {\rm dim}(X)=3$, the intersection $V\cap C_a$ is generically empty (which traduces the inequality ${\rm codim}_{X^{'}} (V^{'}) >0$). This is an example of a set for which the intersection with an $E$-subscheme is never proper, situation excluded in the projective case $X=\pp^n$. Let us study those degenerate sets.

\subsubsection{Degenerate analytic sets}\label{sec:2.4.2}

Let $U$ be an $E$-concave open subset of $X$. 

\begin{defi} 
An analytic subset $V$ in $U$ is called $E$-degenerate if it contains an irreducible branch $V_0$ such that
$$
\begin{cases} 
{\rm dim}(V_0^{'})  <  {\rm dim} (I_{V_0})  \ {\rm if \ dim}( V_0)  \le k \\
{\rm dim} (V_0^{'})  <  {\rm dim} (U^{'})    \ {\rm if \ dim}( V_0)  >  k. \\
\end{cases}
$$
\end{defi}

\begin{prop}\label{p2.5}
Suppose that $V\subset U$ is an irreducible analytic subset of dimension $r\le k$ in $U$. 
The following assertions are equivalent:  
\begin{enumerate}
\item The set $V$ is not $E$-degenerate;  
\item The equality $codim (V^{'}) = k -  dim (V)$ holds; 
\item The analytic subset $\Upsilon_{V^{'}}:=\{a\in V^{'}, dim (V\cap C_a) \ge 1\}$ 
is of codimension at least two in $V^{'}$. 
\item There exists $a$ in $ Reg (U^{'})$ such that $ dim (V \cap C_a) =  0$.
\end{enumerate}  
\end{prop}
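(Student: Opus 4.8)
The plan is to prove the four equivalences in a cycle, using the dimension formula from Proposition~\ref{p2.4} as the main engine. First I would establish $(1)\Leftrightarrow(2)$, which is essentially unwinding definitions: since $\dim(V)=r\le k$, the defining inequality of $E$-degeneracy reads $\dim(V_0')<\dim(I_{V_0})$, and for the single irreducible branch $V_0=V$ we have $\dim(I_V)=\dim(V)+\dim(X')-k$ because $p_U$ is a submersion with $k$-codimensional fibers $C_a$ (here I use $\dim(X')=\dim(U')$). Non-degeneracy of $V$ then says $\dim(V')\ge \dim(V)+\dim(X')-k$, i.e. $\operatorname{codim}(V')\le k-\dim(V)$; the reverse inequality always holds by Proposition~\ref{p2.4} (the generic fiber dimension of $q_V$ is at most $\dim(X')-(k-\dim V)$ since $\dim(V\cap C_a)\ge\dim V+\dim C_a-n$ cannot be negative... — more carefully, Proposition~\ref{p2.4}(1) with $r=\dim V$ gives $\operatorname{codim}_\alpha(V')= k-\dim V+\min\{\dim(V\cap C_a)\}\ge k-\dim V$). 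So $(1)\Leftrightarrow(2)$.

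Next $(2)\Leftrightarrow(3)$. By Proposition~\ref{p2.4}(1), at a point $\alpha\in V'$ we have $\operatorname{codim}_\alpha(V')=k-\dim V+\min\{\dim(V\cap C_a): a\in V'\text{ near }\alpha\}$. Thus $\operatorname{codim}(V')=k-\dim V$ exactly when there is some $\alpha$ near which the minimal fiber dimension is $0$, i.e. when the locus $\Upsilon_{V'}=\{a\in V':\dim(V\cap C_a)\ge 1\}$ does not fill a whole local component of $V'$; equivalently $\Upsilon_{V'}$ is a proper analytic subset. To upgrade ``proper subset'' to ``codimension at least two'' I would invoke the product-of-projective-hyperplanes structure of $V'$ noted after Proposition~\ref{p2.4}: $V'=\bigcup_{x\in V}q_U(p_U^{-1}(x))$, and the condition $\dim(V\cap C_a)\ge 1$ forces $C_a$ to contain a positive-dimensional analytic piece of $V$, hence to pass through infinitely many points of $V$; this constrains $a$ to lie in intersections of the hyperplane-type strata, which cuts codimension at least two inside $V'$ whenever $V'$ has the expected codimension. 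This is the step I expect to require the most care — one must argue that the ``bad'' locus cannot be a hypersurface in $V'$, only something of higher codimension, and this uses irreducibility of $V$ together with the fact that a generic translate $C_a$ of the family actually moves (globally generated, essential) so that $\{a: x\in C_a\}$ and $\{a: y\in C_a\}$ for $x\ne y$ are genuinely distinct hyperplane-strata.

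Finally $(2)\Leftrightarrow(4)$, and here the essentiality/global generation hypotheses enter decisively. If $\dim(V\cap C_a)=0$ for some $a\in\mathrm{Reg}(U')$ then by Proposition~\ref{p2.4}(2), $V'$ is irreducible of codimension exactly $k-\dim V$, giving $(4)\Rightarrow(2)$. Conversely, if $\operatorname{codim}(V')=k-\dim V$, then the minimal fiber dimension over $V'$ is $0$ by the formula above, so there is an open dense subset of $V'$ over which $\dim(V\cap C_a)=0$; since $\mathrm{Reg}(U')$ is a nonempty Zariski-open subset of $X'$ (Proposition~\ref{p2.1}, using that $E$ is essential and globally generated), it meets this open dense subset of $V'$, and any $a$ in the intersection works, giving $(2)\Rightarrow(4)$. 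Assembling $(1)\Rightarrow(2)\Rightarrow(3)\Rightarrow$ (back to $(2)$) and $(2)\Leftrightarrow(4)$ closes the loop; I would present it as $(1)\Leftrightarrow(2)\Leftrightarrow(4)$ and $(2)\Leftrightarrow(3)$ separately for readability.
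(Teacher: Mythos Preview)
Your overall architecture matches the paper's (Proposition~\ref{p2.4} driving a cycle of implications), and your handling of $(1)\Leftrightarrow(2)$ and $(4)\Rightarrow(2)$ is correct. The genuine gap is $(2)\Rightarrow(3)$. Your ``hyperplane-strata'' heuristic---that $\dim(V\cap C_a)\ge 1$ forces $a$ into intersections of the sets $q_U(p_U^{-1}(x))$ and hence into codimension $\ge 2$---is not a proof: those strata all have the same codimension $k$ in $X'$ and overlap in complicated ways inside $V'$; nothing in your sketch excludes their bad locus from being a hypersurface of $V'$. The paper bypasses this entirely with a one-line dimension count \emph{upstairs} in $I_V$, exploiting irreducibility. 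Since $V$ is irreducible and $p_U$ is a submersion, $I_V$ is irreducible; if $\operatorname{codim}_{V'}(\Upsilon_{V'})\le 1$, then the fibers of $q_V$ over $\Upsilon_{V'}$ all having dimension $\ge 1$ gives $\dim q_V^{-1}(\Upsilon_{V'})\ge 1+(\dim V'-1)=\dim V'=\dim I_V$ (the last equality is exactly $(1)\Leftrightarrow(2)$), so irreducibility forces $q_V^{-1}(\Upsilon_{V'})=I_V$, hence $\Upsilon_{V'}=V'$ and every fiber has positive dimension, contradicting (2) via Proposition~\ref{p2.4}(1). This is the missing idea you flagged as ``requiring the most care,'' and it is much simpler than the geometric picture you propose.

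A minor secondary gap: in $(2)\Rightarrow(4)$ you assert that the Zariski-open $\operatorname{Reg}(U')$ meets the good locus of $V'$, but you have not excluded $V'\subset X'\setminus\operatorname{Reg}(U')$. The paper closes this by observing that for each $x\in V$ a generic $E$-subscheme through $x$ is already smooth (globally generated $+$ essential, via Proposition~\ref{p2.1}), so $q_U(p_U^{-1}(V))\cap\operatorname{Reg}(U')$ is dense in $V'$ and in particular meets $V'\setminus\Upsilon_{V'}$ once (3) is known.
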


\begin{proof} 
$1\Rightarrow 2$. The equality ${\rm dim} (I_V)={\rm dim} (V)+ {\rm dim} (U^{'})-k$ and the inequality ${\rm dim} (I_V) \ge {\rm dim} (V^{'})$ ensure that $V$ is not $E$-degenerate if and only if ${\rm dim} (I_V)={\rm dim}( V^{'})$, that is ${\rm codim} (V^{'}) = k - {\rm dim}( V)$.
\vskip 1mm
\noindent
$2\Rightarrow 3$. If ${\rm dim} (\Upsilon_{V^{'}}( \ge {\rm dim} (V^{'})-1$, then ${\rm dim} (q_V^{-1}(\Upsilon_{V^{'}})) \ge 1+{\rm dim} (V^{'})-1$. If $V$ is not $E$-degenerate, ${\rm dim} (V^{'})={\rm dim} (I_V)$ so that $q_V^{-1}(\Upsilon_{V^{'}})=I_V$ and $V^{'}=\Upsilon_{V^{'}}$ by a dimension argument since $I_V$ and $V'$ are irreducible. Thus ${\rm dim}(V \cap C_a)\ge 1$ for any $a$ in $U^{'}$, contradicting $2$ by assertion $(1)$ of proposition~\ref{p2.4}.
\vskip 1mm
\noindent
$3\Rightarrow 4$. For all $x\in X$, a generic $E$-subscheme containing $x$ is a smooth complete intersection. Thus, the open subset $q_U(p_U^{-1}(V))\cap {\rm Reg}(U^{'})$ is dense in $V^{'}$ and meets $V^{'}\setminus \Upsilon_{V^{'}}$ under hypothesis $3$. 
\vskip 1mm
\noindent
$4\Rightarrow 1$ is a consequence of the second point in proposition~\ref{p2.4}. 
\end{proof}

For $r\ge k$, we obtain a similar caracterisation of $E$-degenerate set using the set  $\Upsilon_{V^{'}}:=\{a\in V^{'}, {\rm dim} (V\cap C_a) > r-k\}$. In particular, $X$ is not $E$-degenerate and the set $\Upsilon_{X^{'}}\subset X^{'}$ has codimension at least two. 

If $\rm{dim} (V)=k$, we note ${\rm Reg}_V(U^{'})$ the set of parameters $a\in U^{'}$ for which $C_a$ intersects $V$ transversaly outside its singular locus ${\rm Sing} (V)$.

\begin{coro}\label{c2.3}
Let $V\subset U$ be irreducible of dimension $k$. The following assertions are equivalent:  
\begin{enumerate}
\item The set $V$ is not $E$-degenerate;  
\item The set $Reg_V(U^{'})$ is nonempty;  
\item The set $Reg_V(U^{'})$ is dense in $U^{'}$.
\end{enumerate}
\end{coro}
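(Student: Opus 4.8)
The plan is to read off the whole equivalence from Proposition~\ref{p2.5} in the boundary case $r=k$, the only genuinely new input being a Bertini-type argument relating non-$E$-degeneracy to the transversality locus $\mathrm{Reg}_V(U')$. Throughout I would use that $I_V$ and $V'=q_U(I_V)$ are irreducible (Proposition~\ref{p2.4}), that $\dim I_V=\dim V+\dim U'-k=\dim U'$ because $\dim V=k$, that $U'$ --- being connected and smooth --- is irreducible, and that by definition a parameter $a\in\mathrm{Reg}_V(U')$ has $C_a\cap V$ a nonempty finite subset of $V\setminus\mathrm{Sing}(V)$ along which $C_a$ is transverse to $V$; in particular $\mathrm{Reg}_V(U')\subset V'$ and every $a$ there satisfies $\dim(V\cap C_a)=0$.

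The implication $(3)\Rightarrow(2)$ is immediate since $U'\neq\emptyset$. For $(2)\Rightarrow(1)$ I would argue contrapositively. Suppose $V$ is $E$-degenerate; since $\dim V=k$ this means precisely $\dim V'<\dim I_V$, so the generic fibre of the surjection $q_V\colon I_V\to V'$ has dimension $\dim I_V-\dim V'\geq 1$. Hence the closed analytic set $\Upsilon_{V'}=\{a\in V':\dim(V\cap C_a)\geq 1\}$ contains a dense subset of the irreducible space $V'$, so $\Upsilon_{V'}=V'$. Then every $a\in V'$ has $\dim(V\cap C_a)\geq 1$; since $\mathrm{Reg}_V(U')\subset V'$ while its elements satisfy $\dim(V\cap C_a)=0$, we get $\mathrm{Reg}_V(U')=\emptyset$, so $(2)$ fails.

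For $(1)\Rightarrow(3)$: Proposition~\ref{p2.5} (case $r=k$) gives $\mathrm{codim}(V')=k-\dim V=0$, so $V'=U'$ by irreducibility. Since the irreducible components of $\mathrm{Sing}(V)$ have dimension $\leq k-1$, Proposition~\ref{p2.4}(1) shows that $(\mathrm{Sing}\,V)'=\{a\in U':C_a\cap\mathrm{Sing}(V)\neq\emptyset\}$ has codimension $\geq 1$ in $U'$, hence is nowhere dense (and empty if $V$ is smooth). Finally, as $E$ is globally generated the linear systems $|L_1|,\dots,|L_k|$ are base-point-free, so applying classical Bertini-type transversality (characteristic $0$) inductively to the smooth variety $V\setminus\mathrm{Sing}(V)$ shows that the set $T_V$ of parameters $a$ for which the hypersurfaces $\{s_1=0\},\dots,\{s_k=0\}$ cutting out $C_a$ meet $V\setminus\mathrm{Sing}(V)$ in general position contains $U'\cap W$ for some dense Zariski-open $W\subset X'$, hence is dense open in $U'$. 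For $a$ in the dense open set $\Omega:=\bigl(U'\setminus(\mathrm{Sing}\,V)'\bigr)\cap T_V$ we have $a\in V'=U'$, so $C_a\cap V\neq\emptyset$; this intersection misses $\mathrm{Sing}(V)$ and is transverse there, hence $a\in\mathrm{Reg}_V(U')$. Thus $\mathrm{Reg}_V(U')\supset\Omega$ is dense, which is $(3)$.

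I expect the transversality step in $(1)\Rightarrow(3)$ to be the crux: one must check that simultaneous transversality of the $k$ section hypersurfaces with the merely analytic smooth locus $V\setminus\mathrm{Sing}(V)$ is an open dense condition on $a$, and that genericity in the algebraic parameter space $X'$ survives restriction to the analytically open subset $U'$. Global generation of $E$ is exactly what makes the family $(C_a)$ move enough for Bertini to apply; the remaining steps are bookkeeping with Propositions~\ref{p2.4} and~\ref{p2.5}.
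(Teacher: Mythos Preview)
Your proof is correct and follows essentially the same route as the paper: the cycle $(3)\Rightarrow(2)\Rightarrow(1)\Rightarrow(3)$, with $(2)\Rightarrow(1)$ reduced to Proposition~\ref{p2.5} and $(1)\Rightarrow(3)$ obtained by showing $V'=U'$, that $(\mathrm{Sing}\,V)'$ is nowhere dense, and that transversality is generic. The only real difference is in the transversality step: you invoke Bertini for base-point-free systems, while the paper appeals directly to Sard's theorem as in Proposition~\ref{p2.1} (perturb the constant terms $a_{i0}$, which is possible since $E$ is globally generated). Sard is the cleaner choice here precisely because $V$ is only analytic, which is exactly the concern you flagged at the end; your worry about whether algebraic Bertini applies to the analytic smooth locus is legitimate, and the paper sidesteps it by working with the differentiable Sard theorem rather than the algebraic statement. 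Your contrapositive for $(2)\Rightarrow(1)$ is fine but more elaborate than needed --- the paper just cites the implication $(4)\Rightarrow(1)$ of Proposition~\ref{p2.5}.
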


\begin{proof} $(3)\Rightarrow (2)$ is trivial. If $a\in {\rm Reg}_V(U^{'})$, then ${\rm dim}(V\cap C_a)=0$ and $(2)\Rightarrow (1)$  follows from proposition~\ref{p2.5}. Let us show $(1)\Rightarrow (3)$.
Since ${\rm dim}({\rm Sing}(V)) < k$, then ${\rm dim}({\rm Sing} (V))^{'}) < {\rm dim} U^{'}$ from proposition~\ref{p2.4}. But $V^{'}$ is irreducible of codimension $0$ in the connected open set $U^{'}$, so $U^{'}=V^{'}$. For $a$ in $U^{'}$ generic, the intersection $V\cap C_a$ is finite ($X$ is compact) and does not meet ${\rm Sing}(V)$. As in proposition ~\ref{p2.1}, Sard's theorem implies that transversality $T_x X=T_x C_a\oplus T_x V$ is then generically satisfied. 
\end{proof}

We can interpret corollary~\ref{c2.3} in the algebraic case $U=X$. 

\begin{coro}\label{c2.4}
An irreducible subvariety $V$ of $X$ of dimension $k$ is $E$-degenerate if and only if its class $[V]\in A_k(X)$ is orthogonal to $L_1\cdots L_k$, that is if the intersection number $[V]\cdot L_1\cdots L_k$ is zero. In particular, there are no $E$-degenerate algebraic subvarieties if and only if $E$ is very ample.
\end{coro}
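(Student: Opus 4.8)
The plan is to treat the two assertions of Corollary~\ref{c2.4} separately. For the equivalence ``$V$ is $E$-degenerate $\Leftrightarrow [V]\cdot L_1\cdots L_k=0$'', I would first reduce, via Propositions~\ref{p2.4} and~\ref{p2.5}, to a statement about whether a generic $E$-subscheme meets $V$, and then identify that intersection number with an actual count of points. Since $V$ is irreducible of dimension $k$, Proposition~\ref{p2.5} (equivalence $(1)\Leftrightarrow(2)$) tells us that $V$ is \emph{not} $E$-degenerate precisely when ${\rm codim}(V')=0$, i.e.\ when $V'=X'$ (recall that $V'$ is irreducible and closed in the irreducible variety $X'$ by Proposition~\ref{p2.4}). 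On the other hand, because $E$ is globally generated and essential, ${\rm Reg}(X')$ is dense, so a generic $a\in X'$ yields a smooth complete intersection $C_a$ of dimension $n-k$; its cycle class is $c_1(L_1)\cdots c_1(L_k)\cap[X]$, whence (using that $X$ is smooth, or simply that $[C_a]$ is rationally equivalent to $c_1(L_1)\cdots c_1(L_k)\cap[X]$) one gets $\deg\big([V]\cdot[C_a]\big)=[V]\cdot L_1\cdots L_k$.

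Then I would argue by cases. If $V'\subsetneq X'$, a generic $a$ lies outside this proper closed subset and still gives a smooth $C_a$, so $V\cap C_a=\emptyset$, hence $[V]\cdot[C_a]=0$ and $[V]\cdot L_1\cdots L_k=0$. If instead $V'=X'$, then $q_V\colon I_V\to X'$ is a dominant morphism between irreducible varieties of the same dimension, so its generic fibre $(V\cap C_a)\times\{a\}$ is finite, and it is nonempty since $a\in V'=X'$; choosing $a$ generic so that in addition $C_a$ is a smooth complete intersection, $[V]\cdot[C_a]$ is a nonzero effective $0$-cycle supported on $V\cap C_a$, so $[V]\cdot L_1\cdots L_k\ge\#(V\cap C_a)>0$. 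Combining the two cases gives the asserted equivalence.

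For the ``in particular'', the previous equivalence shows that there is no $E$-degenerate ($k$-dimensional, irreducible) subvariety of $X$ if and only if $[V]\cdot L_1\cdots L_k>0$ for every irreducible $V\subset X$ with $\dim V=k$. Applying this to $V=V(\tau)$ for $\tau\in\Sigma(n-k)$ and invoking Corollary~\ref{c2.2} gives the implication ``$\Rightarrow$'': $E$ is very ample. For ``$\Leftarrow$'', assume $E$ very ample, so by Corollary~\ref{c2.2} we have $V(\tau)\cdot L_1\cdots L_k>0$ for every $\tau\in\Sigma(n-k)$; given an arbitrary irreducible $k$-dimensional $V$, I would degenerate it by a generic one-parameter subgroup $\lambda\colon\cp^*\to\mathbb T$: the flat limit $\lim_{t\to0}\lambda(t)\cdot V$ is a nonzero effective $\mathbb T$-invariant $k$-cycle, hence of the form $\sum_i\nu_i V(\tau_i)$ with $\nu_i>0$ and $\tau_i\in\Sigma(n-k)$, and it is rationally equivalent to $V$; therefore $[V]\cdot L_1\cdots L_k=\sum_i\nu_i\,\big(V(\tau_i)\cdot L_1\cdots L_k\big)>0$. (Equivalently, one quotes that the cone of effective $k$-cycles on the complete toric variety $X$ is generated by the classes $[V(\tau)]$, $\tau\in\Sigma(n-k)$.)

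The main obstacle is exactly this last step: passing from positivity of $L_1\cdots L_k$ against the finitely many $\mathbb T$-invariant classes $[V(\tau)]$ to positivity against the class of an arbitrary effective $k$-cycle --- the $[V(\tau)]$ span $A_k(X)$ only with possibly negative coefficients, so one genuinely needs either the effective-cone statement or the degeneration argument above, the latter resting on the standard facts that a flat limit of an effective cycle is effective of the same rational-equivalence class and that $\mathbb T$-fixed effective $k$-cycles are nonnegative integral combinations of the $[V(\tau)]$. The rest is a routine assembly of Propositions~\ref{p2.4}--\ref{p2.5}, Corollary~\ref{c2.2}, and the complete-intersection formula for the class of $C_a$.
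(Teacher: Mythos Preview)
Your argument is correct and follows the paper's own strategy: the paper's proof is the single line ``immediate from Proposition~\ref{p2.5} and Corollaries~\ref{c2.2} and~\ref{c2.3}'', and your first part is precisely the unwinding of that reference (with Corollary~\ref{c2.3} giving the transversal-intersection count in the non-degenerate case, and Proposition~\ref{p2.5}/\ref{p2.4} giving $V'\subsetneq X'$ in the degenerate one).

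Where you go further than the paper is in the ``in particular''. The paper treats the $\Leftarrow$ direction as contained in Corollary~\ref{c2.2}, but that corollary only controls the numbers $V(\tau)\cdot L_1\cdots L_k$; as you point out, the $[V(\tau)]$ merely generate $A_k(X)$ over $\zp$ (this is exactly what the paper recalls after Proposition~\ref{p2.2}), so positivity on all $V(\tau)$ does not formally imply positivity on every effective class. Your fix via the toric degeneration/effective-cone statement (every effective $k$-cycle is rationally equivalent to a nonnegative combination of the $V(\tau)$) is the standard and correct way to close this gap; it is a genuine extra input that the paper leaves implicit. In short: same route, but you make explicit a step the paper's one-line proof elides.
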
 

\begin{proof}
This is immediate from proposition~\ref{p2.5} and corollaries~\ref{c2.2} and \ref{c2.3}.
\end{proof}

Thus, strict inequality $[V]\cdot L_1\cdots L_k>0$ is equivalent to that for any $E$-concave open set $U$ in $X$, there exists $a\in U^{'}$ for which the intersection $V\cap C_a$ is transversal, consisting in $[V]\cdot L_1\cdots L_k$ distinct points in $U$. In particular, {\it any $k$-dimensional closed subvariety which is not $E$-degenerate meets any $E$-concave open set}. 

\begin{coro}\label{c2.5} If $V\subset U$ is not $E$-degenerate and has pure dimension $r\le k$, the morphism $q_V : I_V \longrightarrow V^{'}$ is a ramified covering over the open subset  $V^{'}\setminus\Upsilon_{V^{'}}$ of degree $N=[\cp(I_V):\cp(V^{'})]$. 
\end{coro}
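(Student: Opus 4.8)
The plan is to recognise $q_V\colon I_V\to V'$ as a proper, generically finite holomorphic map between irreducible spaces and to apply the standard structure theory of such maps, the needed input on dimensions being already contained in Propositions~\ref{p2.4} and~\ref{p2.5}. First I would reduce to the case $V$ irreducible: for $V$ of pure dimension $r$ each component is again non-$E$-degenerate by Proposition~\ref{p2.5}, and $I_V=\bigcup_j I_{V_j}$, $V'=\bigcup_j V_j'$ with the $I_{V_j}$ and $V_j'$ irreducible (proof of Proposition~\ref{p2.4}), so it suffices to treat one irreducible branch. Assuming then $V$ irreducible, $I_V$ and $V'$ are irreducible, and I set $W:=V'\setminus\Upsilon_{V'}=\{a\in V'\,;\,\dim(V\cap C_a)=0\}$, which by Proposition~\ref{p2.5} is a dense Zariski-open subset of $V'$, nonempty because it meets $\mathrm{Reg}(U')$.

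Next I would carry out the two substantive steps. First, $q_V=q_U|_{I_V}$ is proper, since $I_V$ is closed in $I_U$ and $q_U$ is proper, and surjects onto $V'$ by definition; over $W$ its fibres $q_V^{-1}(a)=(V\cap C_a)\times\{a\}$ are finite, so Remmert's finite mapping theorem makes $q_V\colon q_V^{-1}(W)\to W$ a finite holomorphic map, with $q_V^{-1}(W)$ a dense open subset of $I_V$. Second, non-$E$-degeneracy enters only through dimension: Proposition~\ref{p2.5} (in the form $\mathrm{codim}(V')=k-\dim V$) together with $\dim I_V=\dim V+\dim U'-k$ (valid since $p_U$ is a submersion) gives $\dim I_V=\dim V'$, so $q_V$ is dominant and generically finite and the field extension $\cp(V')\hookrightarrow\cp(I_V)$ is finite; I put $N:=[\cp(I_V):\cp(V')]=[\cp(q_V^{-1}(W)):\cp(W)]$.

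It then remains to upgrade the finite surjective map $q_V\colon q_V^{-1}(W)\to W$ to a ramified covering of degree exactly $N$, and this is where the purely analytic work sits. I would appeal to the standard structure theory of finite holomorphic maps between reduced complex spaces (see \cite{GR:gnus}): a finite surjective map from an irreducible complex space is an analytic branched covering whose number of sheets equals the degree of the associated field extension. Concretely one lets $B\subset W$ be the union of $\mathrm{Sing}(V')\cap W$, the non-normal locus of $V'$ lying in $W$, and the $q_V$-image of the ramification locus of $q_V|_{q_V^{-1}(W)}$; these are nowhere dense analytic subsets, because in characteristic zero a finite dominant morphism of irreducible varieties is generically \'etale. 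Over the connected manifold $W\setminus B$, $q_V$ is an honest unramified covering of locally — hence globally — constant degree, which must be $N$, while $\#q_V^{-1}(a)\le N$ for every $a\in W$; since $q_V^{-1}(W)$ is the analytic closure in $I_V$ of $q_V^{-1}(W\setminus B)$, this says exactly that $q_V\colon I_V\to V'$ is a ramified covering of degree $N$ over $V'\setminus\Upsilon_{V'}$. The main obstacle is therefore technical rather than conceptual: controlling $q_V$ over the singular and non-normal locus of $V'$, which one settles either by invoking the cited branched-covering theory or, more by hand, by base-changing to the normalization of $V'$.
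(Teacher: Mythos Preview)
Your argument is correct and follows the same overall strategy as the paper: exhibit $q_V$ as a proper map with finite fibres over $W:=V'\setminus\Upsilon_{V'}$, hence a branched covering there, and identify its sheet number with a field-extension degree. The one substantive difference is in what each proof chooses to justify. You invoke the structure theory of finite holomorphic maps from \cite{GR:gnus} to get the branched-covering structure and the equality (sheet number)$=$(field-extension degree), and you treat the identifications $\cp(V')=\cp(W)$ and $\cp(I_V)=\cp(q_V^{-1}(W))$ as automatic from density. The paper does the reverse: it declares the branched-covering structure ``clear'' and spends all its effort on precisely those two field identifications, showing via Proposition~\ref{p2.5} that $\Upsilon_{V'}$ has codimension $\ge 2$ in $V'$ (so Hartogs gives $\cp(V')=\cp(W)$), and then using that $q_U$ is a submersion over $\mathrm{Reg}(U')$ to deduce that $q_V^{-1}(\Upsilon_{V'})$ has codimension $\ge 2$ in $I_V$ (so again Hartogs gives $\cp(I_V)=\cp(q_V^{-1}(W))$).

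This difference matters if $\cp(\cdot)$ is read as \emph{global} meromorphic functions on a possibly non-compact analytic space: then restriction to a dense open is only injective, not surjective, and your one-line identification ``$[\cp(I_V):\cp(V')]=[\cp(q_V^{-1}(W)):\cp(W)]$'' is exactly the point the paper works to establish. Your route is fine if one reads $\cp(\cdot)$ as the function field at the generic point (so insensitive to passing to dense opens), or if one argues directly that a \emph{proper} generically finite surjection between irreducible complex spaces induces a finite extension of global meromorphic function fields of the expected degree; but you should make that explicit rather than asserting the equality. What your approach buys is a cleaner packaging via one black-box theorem; what the paper's approach buys is an explicit, elementary reason---codimension $\ge 2$ plus Hartogs---for why nothing is lost when restricting to $W$.
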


\begin{proof}
Clearly, the morphism $q_{V}$ restricts to a finite ramified covering of degree $N=[\cp(q_V^{-1} (V^{'} \setminus \Upsilon_{V^{'}})):\cp(V^{'} \setminus \Upsilon_{V^{'}})]$ over $V^{'} \setminus
\Upsilon_{V^{'}}$. By Proposition 5, the codimension of $\Upsilon_{V^{'}}$ in $V^{'}$ is at least two and $\cp(V^{'})=\cp(V^{'} \setminus\Upsilon_{V^{'}})$ by Hartog's extension theorem. Since $q_U$ is a proper submersion over the dense open set ${\rm Reg} (U^{'})\subset U^{'}$, the codimension in $q_U^{-1}(V^{'}\cap {\rm Reg} (U^{'}))$ of the analytic subset $q_U^{-1}(\Upsilon_{V^{'}}\cap {\rm Reg} (U^{'}))$ is at least two, and so is its analytic closure $q_U^{-1}(\Upsilon_{V^{'}})$ in  $q_U^{-1}(V^{'})=q_V^{-1}(V^{'})=I_V$. Consequently $\cp(I_V)=\cp(q_V^{-1} (V^{'} \setminus \Upsilon_{V^{'}}))$, which ends the proof. 
\end{proof}

If ${\rm dim}(V)<k$, a generic $E$-subscheme does not meet $V$ and one is tempted to think that the intersection $V\cap C_a$ consists in one point for a generic $a$ in $V^{'}$ (meaning that the subvarieties $I_V$ and $V^{'}$ are bimeromorphically equivalent). This is generally not true, as the following simple example shows~:

\begin{exem} 
Suppose $X=\pp^1\times\pp^1\times\pp^1$ and let $E$ be the essential globally generated bundle $\mathcal{O}_X((2,0,0),(0,1,0))$. A generic $E$-subscheme $C_a$ is the disjoint union $C_a=(\{P_1\}\times\{P\}\times\pp^1) \cup (\{P_2\}\times\{P\}\times\pp^1)$
where the points $P_1$ and $P_2$ are distinct and belong to the first factor $\pp^1$ while the point $P$ belongs to the second factor $\pp^1$. For $V=\pp^1\times\{[0:1]\}\times\{[0:1]\}$ the set $C_a\cap V$ is generically empty. However, for $a$ in $V^{'}$ generic, the intersection $V\cap C_a$ consists of two distincts points.
\end{exem}

To understand the behaviour of $E$-duality with rational equivalence, we need to consider the case of cycles, taking now account of multiplicities. 

\subsubsection{$E$-duality for cycles}\label{sec:2.4.3}

Let $U\subset X$ be an open $E$-concave subset and let $V$ be an irreducible analytic subset of $U$. 
We define the $E$-dual cycle $V^*$ of $V$ as follow. If $V$ is $E$-degenerate or ${\rm dim}(V) > k$, then $V^*:=0$. Otherwise, the field $\cp(I_V)$ is a finite extension on $\cp(V^{'})$ and we set
$$
V^*=[\cp(I_V): \cp(V^{'})]\cdot V^{'}.
$$
We extend by linearity this definition to the case of cycles. $E$-duality agrees with rational equivalence:

\begin{prop}\label{p2.6} A. Let $l= dim(X^{'})$. The map $V\mapsto V^*$ induces a morphism of graded $\zp$-free modules on the Chow groups of $X$ and $X^{'}$
$$
A_{j}(X)\rightarrow A_{l-k+j}(X^{'})
$$
for all $j=0,\ldots  ,k$.

B. Suppose that the line bundles $L_1,\ldots,L_k$ are very ample and let $W$ be an effective $(k-1)$-cycle on $X$ of class $[W]=\sum_{\tau\in\Sigma(n-k+1)} \nu_{\tau} [V(\tau)]$ in $A_{n-k+1}(X)$. 
Then $W^*$ is an effective divisor in the product of projective spaces $X^{'}$ of multidegree $(d_1,\cdots,d_k)\in \zp^k$, where
$$
d_i=\sum_{\tau\in\Sigma(n-k+1)} \nu_{\tau} MV_{k-1}(P_1^{\tau},\cdots,\hat{P_i^{\tau}},\cdots,P_k^{\tau})
$$
for all $i=1,\ldots,k$ (we omit the $i$-th polytope).
\end{prop}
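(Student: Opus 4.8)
The plan is to establish part A first, by showing that the map $V\mapsto V^*$ is well-defined on cycles and descends to rational equivalence. Concretely, I would invoke Proposition~\ref{p2.4}(2) together with Corollary~\ref{c2.5}: for an irreducible $V$ of dimension $j\le k$ that is not $E$-degenerate, $V^*=[\cp(I_V):\cp(V^{'})]\cdot V^{'}$ is an irreducible cycle of dimension $l-k+j$ in $X^{'}$, so the map does shift degree as claimed. For compatibility with rational equivalence, it suffices to test on the generators: one writes $V^*$ as a pushforward $q_{U*}[I_V]=q_{U*}p_U^*[V]$ whenever the intersection is proper. Since $p_U$ is flat (it is a submersion, indeed locally trivial by Proposition~\ref{p2.3}) and $q_U$ is proper, flat pullback and proper pushforward both preserve rational equivalence, and the composite $q_{U*}p_U^*$ is the desired morphism $A_j(X)\to A_{l-k+j}(X^{'})$. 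The $E$-degenerate and $\dim V>k$ cases send classes to $0$, consistent with the fact that $p_U^*[V]$ then has generic fibre of positive dimension under $q_U$, so $q_{U*}p_U^*[V]=0$ by dimension.

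For part B, the strategy is to reduce to $W=V(\tau)$ with $\tau\in\Sigma(n-k+1)$ (a $(k-1)$-dimensional $\mathbb{T}$-subvariety), then compute the multidegree of $V(\tau)^*$ componentwise. Fix $j\in\{1,\dots,k\}$ and intersect $V(\tau)^*$ with a generic line in the $j$-th projective factor $\pp(\Gamma(X,L_j))$ of $X^{'}$; this line is a pencil of sections of $L_j$, and the intersection number is $d_j$. Unwinding the duality, this number counts, with multiplicity, the pairs $(x,a)$ with $x\in V(\tau)$, $x\in C_a$, and $a$ on the fixed generic line in the $j$-th factor while the other coordinates $a_i$, $i\ne j$, are generic. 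Equivalently, $x$ ranges over $V(\tau)\cap C'$ where $C'=\{s_i=0,\ i\ne j\}$ is a generic $\widehat{L_j}$-subscheme (the bundle with the $j$-th summand deleted), and then one counts how many sections of $L_j$ on the generic line vanish at such an $x$. Since $V(\tau)$ has dimension $k-1$ and the deleted bundle has rank $k-1$, the restriction $\widehat{E}^{\tau}:=\bigoplus_{i\ne j}L_i^{\tau}$ on $V(\tau)$ is a rank-$(k-1)$ globally generated bundle, and by Proposition~\ref{p2.2} (Bernstein/Kushnirenko) the intersection $V(\tau)\cap C'$ is transversal, finite, of cardinality $MV_{k-1}(P_1^{\tau},\dots,\widehat{P_j^{\tau}},\dots,P_k^{\tau})$.

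The final step is to verify that for each of these $MV_{k-1}(\dots)$ points $x$, exactly one section on the generic pencil in the $j$-th factor vanishes at $x$, so that the count is exactly $d_j=\nu_\tau\cdot MV_{k-1}(P_1^\tau,\dots,\widehat{P_j^\tau},\dots,P_k^\tau)$ after restoring the multiplicity $\nu_\tau$ and summing over $\tau$. This follows because a generic pencil $\{a_j=\lambda b + \mu c\}$ in $\pp(\Gamma(X,L_j))$, evaluated at a fixed point $x$, gives a single linear condition $\lambda b(x)+\mu c(x)=0$ on $[\lambda:\mu]$, which has exactly one solution since $L_j$ is globally generated (so $b(x),c(x)$ are not both zero for generic $b,c$). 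I expect the main obstacle to be the transversality/multiplicity bookkeeping: one must check that the generic pencil meets $V(\tau)^*$ transversally (no excess intersection coming from the $\Upsilon$-locus of positive-dimensional fibres, which is handled by Corollary~\ref{c2.5} since that locus has codimension $\ge 2$), and that the degree $[\cp(I_{V(\tau)}):\cp(V(\tau)^{'})]$ folded into $V(\tau)^*$ is correctly absorbed — i.e. that counting pairs $(x,a)$ upstairs versus counting points $a$ downstairs with the field-degree multiplicity gives the same number, which is exactly the content of the projection formula applied to $q_V$. Once transversality is in place, Proposition~\ref{p2.2} does the combinatorial work and the mixed-volume formula drops out by linearity over the $\nu_\tau$.
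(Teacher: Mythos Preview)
Your argument for Part A is essentially identical to the paper's: both identify $V\mapsto V^*$ with the composite $q_{U*}\circ p_U^*$, note that $p_U$ is a submersion and $q_U$ is proper, and invoke the standard compatibility of flat pullback and proper pushforward with rational equivalence (the paper cites Appendix~A of \cite{Hartshorne:gnus}).

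For Part B your route diverges from the paper's, though both land on Proposition~\ref{p2.2}. The paper recognises $V(\tau)^*$ as the zero locus of the $(L_1^{\tau},\ldots,L_k^{\tau})$-resultant on the $(k-1)$-dimensional toric variety $V(\tau)$ in the sense of \cite{GKZ:gnus}, and then quotes from \cite{GKZ:gnus} the fact that the partial degree of this resultant in $a_i$ equals the intersection number $V(\tau)\cdot\prod_{j\ne i}L_j$, which Proposition~\ref{p2.2} converts into the mixed volume. You instead compute the $j$-th multidegree directly by intersecting $V(\tau)^*$ with a generic line in the $j$-th factor of $X^{'}$ and unwinding this upstairs on the incidence variety; your pencil argument (one solution $[\lambda:\mu]$ per point $x$, since $L_j$ is globally generated) is exactly what the GKZ degree formula encodes. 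Your approach is more elementary and self-contained, at the cost of having to handle the transversality and multiplicity bookkeeping explicitly (you correctly flag this and invoke Corollary~\ref{c2.5} to control the locus $\Upsilon_{V^{'}}$); the paper's approach is shorter but imports the resultant machinery of \cite{GKZ:gnus} as a black box. Both are valid and both finish with the same Bernstein count.
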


\begin{proof}
A. We have ${\rm dim}(V^*)={\rm dim}(V)+l-k$ by proposition~\ref{p2.4}. The map $V\mapsto V^*$ is the composed map of the usual pull-back map of cycles under the submersion $p_U$ with the direct total image of cycles under the proper morphism $q_U$. Thus, it is compatible with rational equivalence (Appendix A of \cite{Hartshorne:gnus}). 

B. Let $R_{W}$ be the multihomogeneous equation of the effective divisor $W^*$ in the product of projective spaces $X^{'}$, with the convention that $R_{W_0}=1$ for irreducible $E$-degenerate components $W_0$ of $W$. We call this polynomial the {\it $E$-resultant of $W$}. For $\tau\in\Sigma(n-k+1)$, the $E$-resultant $R_{V(\tau)}$ corresponds to the classical $(L_1^{\tau},\cdots,L_k^{\tau})$-resultant of the $k$ very ample line bundles $L_1^{\tau},\cdots,L_k^{\tau}$ on the $(k-1)$-dimensional toric variety $V(\tau)$, as defined in \cite{GKZ:gnus}. If $W$ is rationally equivalent to the cycle $\sum_{\tau\in\Sigma(n-k+1)} \nu_{\tau} V(\tau)$, part A implies by linearity that 
$$
{\rm deg}_{a_i} R_{W}=\sum_{\tau\in\Sigma(n-k+1)} \nu_{\tau} {\rm deg}_{a_i} R^{\tau}_{E}.
$$
From \cite{GKZ:gnus}, the partial degree in $a_i$ of the polynomial $R_{V(\tau)}$ is equal to the intersection number of $V(\tau)$ with a generic $E_i$-subvariety where $E_i:=\oplus_{j=1,j\ne i}^k L_j$. We conclude with proposition~\ref{p2.2}. 
\end{proof}

\section{The toric Abel-transform}\label{sec:3}

\subsection{Meromorphic forms and residue currents}\label{sec:3.1}

Let $X$ be an analytic manifold of dimension $n$. Any $k$-dimensional analytic subset $V \subset X$ gives rise to a positive $d$-closed $(k,k)$-current $[V]$ on $U$,  acting by integration on the regular part of $V$. 
We define the sheaf of $\mathcal{O}_X$-modules $\mathcal{M}^q_V$ on $X$ of germs of {\it meromorphic $q$-forms} on $V$ as the restriction to $V$ of the sheaf of germs of meromorphic $q$-forms in the ambient space $X$ whose polar sets intersect properly $V$. We note $M^q_V$ the corresponding vector space of global sections. Note that the restriction map $M^q_X\longrightarrow M^q_V$ is in general not surjective.

As shown in \cite{hp:gnus}, any $q$-meromorphic form $\phi\in M^q_V$ gives rise to a current $[V]\land \phi$ on $U$, supported on $V$ and acting on any $(k-q,k)$-test-forms $\Psi$ using the {\it principal value criterion}:  
\begin{displaymath}
\langle [V]\land\phi , \Psi\rangle := \lim_{\ep\to 0}\int_{V \cap \{|g|>\ep\}}\phi\land\Psi
\end{displaymath}
where $g$ is any holomorphic function on $U$ not identically zero on $V$ but vanishing on the singular locus of $V$ and on the polar set of $\phi$ (the limit does not depend on the choice of $g$). The polar locus of $\phi$ is the analytic subset 
$$
{\rm pol}(\phi)=\{p\in X,\,\,\bar{\partial}([V]\land \phi)\ne 0\}\subset V.
$$
By Hartog's theorem, ${\rm pol}(\phi)$ is either a codimension $1$ analytic subset of $V$, either the empty set. 

A meromorphic form $\phi$ is called {\it abelian} or {\it regular} at $x\in V$ if the current $[V]\land \phi$ is $\bar{\partial}$-closed at $x$. We note $\om^q_V$ the corresponding sheaf of $\mathcal{O}_X$-modules. When $V$ is smooth, $\om^q_V$ is the usual sheaf $\Omega^q_V$ of germs of holomorphic $q$-forms on the manifold $V$. 

\subsection{The Abel-transform}\label{sec:3.2}

Let $E=L_1\oplus\cdots\oplus L_k$ be a globally generated essential rank $k$ split vector bundle on a smooth complete toric variety $X=X_{\Sigma}$. We keep the notations of section~\ref{sec:2}.
Let $U\subset X$ be an open $E$-concave set. We suppose $U$ connected for simplicity. 

For any $k$-dimensional analytic subset $V\subset U$ and any $q$-form $\phi\in M^q_V$, the {\it Toric Abel-transform} (relatively to $E$) of the locally residual current $[V]\land \phi$ is the current on $U^{'}$
$$
\mca([V]\land\phi):= q_{U*}(p_{U}^*([V]\land\phi)),
$$
where $q_{U*}$ is the push-forward map for currents associated to the proper morphism $q_U$ and
$$
p_{U}^*( [V]\land\phi):=[I_V]\land p_{U}^*\phi.
$$
This current is well defined since $p_U$ is a holomorphic submersion by proposition~\ref{p2.5}, so that $p_{U}^*\phi$ is meromorphic on $I_V$.

If $V$ is not $E$-degenerate, the intersection $V\cap C_{a}$ is generically transversal by corollary~\ref{c2.3} and consists in $N$ points $\{p_1(a),\ldots,p_N(a)\}$ outside the polar locus of $\phi$, whose coordinates vary holomorphically with $a$ by the implicit function theorem. For such generic $a$, the Abel-transform $\mca([V]\land\phi)$ coincides with the holomorphic $q$-form
$$
Tr_V \phi (a)=\sum_{i=1}^N p_i^* \phi.
$$
We call it the {\it Trace of $\phi$ on $V$} (relatively to $E$). 

Let us introduce residue calculus in the picture.

\subsection{Residual representation of the Abel-transform}\label{sec:3.3}
 
For simplicity, we suppose that $V\subset U$ is an analytic hypersurface meeting properly the codimension one orbits of $X$. The more general case of a locally complete intersection can be treated in the same way. So $E=L_1\oplus\cdots\oplus L_{n-1}$ is an essential globally generated bundle of rank $n-1$, where the line bundles  $L_i$ are attached to Cartier divisors $D_i$, with polytopes $P_i$, for $i=1,\ldots,n-1$.

Using the principle of unicity of analytic continuation, we can compute the trace in a sufficiently small open set of $U^{'}$ (always noted $U^{'}$) such that for every $a\in U^{'}$, the intersection $V\cap C_a$ is contained in the torus (this is possible by proposition~\ref{p2.5}). We can thus use the torus variables $t=(t_1,\ldots,t_n)$. We recall that 
$$
C_a\cap \mathbb{T}=\{l_1(a_1,t)=\cdots=l_{n-1}(a_{n-1},t)=0\}
$$
where the Laurent polynomials $l_i(a_i,\cdot)$ are supported by the polytopes $P_i$, for $i=1,\ldots,n-1$.

\begin{prop}\label{p3.1}
The coefficients of the meromorphic $q$-form $Tr_V\phi\in M^q(U^{'})$ are Grothendieck residues of meromorphic $n$-forms (in $t$) depending meromorphically of the parameters $a\in U^{'}$. If $f$ is the analytic equation of $V$ near $V\cap C_a$, $a\in U^{'}$, then
\begin{eqnarray*}
Tr_V\phi=\sum_{|I|=q}\sum_{M\in \prod_{i\in I}P_i}{\rm Res}\, \left[\begin{matrix}
t^{|M|}\phi \land df \land\bigwedge\limits_{j\notin I} dl_j \cr 
f(t), l_1(a_1,t),\ldots  ,l_{n-1}(a_{n-1},t)
\end{matrix}
\right]\,da_M
\end{eqnarray*}
where $M=(m_{i_1},\ldots  ,m_{i_{q}})$, $|M|=m_{i_1}+\cdots+m_{i_{q}}$,  $da_M=\land_{i\in I} da_{i m_i}$ and 
$$
{\rm Res}\, \left[\begin{matrix}
t^{m}\phi \land df \cr 
f, l_1,\ldots  ,l_{n-1}
\end{matrix}
\right]:=\sum_{p\in U}{\rm res}_p \Big(\frac{t^{m}\phi \land df}{f\cdot l_1(a_1,\cdot)\cdots l_{n-1}(a_{n-1},\cdot)}\Big)
$$
denotes the action in $U$ of the Grothendieck residues defined by the polynomials $(f,l_1,\ldots,l_{n-1})$ on the meromorphic $n$-form $t^{m}\phi \land df$ (the residues are zero except eventually on the finite set $C_a\cap V$).
\end{prop}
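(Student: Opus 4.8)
The plan is to start from the current-theoretic description of the Abel-transform and then localize the computation near a transversal intersection point, where the trace becomes an honest finite sum of pull-backs. First I would reduce to the case where $V\cap C_a$ lies in the torus $\mathbb T$, so that everything can be written in the coordinates $t=(t_1,\ldots,t_n)$ and the equations of $C_a\cap\mathbb T$ are the Laurent polynomials $l_i(a_i,t)=\sum_{m\in P_i\cap\zp^n} a_{im}t^m$; this is exactly the reduction allowed by Proposition~\ref{p2.5} together with the unicity of analytic continuation, as already set up just before the statement. Working on the Zariski-open (in $a$) locus where $C_a$ is a smooth complete intersection meeting $V$ transversally outside $\mathrm{Sing}(V)$ and outside $\mathrm{pol}(\phi)$ (nonempty and dense by Corollary~\ref{c2.3} since $V$ is not $E$-degenerate), the intersection $V\cap C_a$ is a finite set of points varying holomorphically with $a$, and $Tr_V\phi(a)=\sum_i p_i^*\phi$ is holomorphic there.

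The heart of the argument is a local residue computation at a single point $p=p(a)\in V\cap C_a$. Near $p$ the $n$ functions $(f,l_1,\ldots,l_{n-1})$ form a regular sequence defining $p$ as an isolated zero (transversality gives that their differentials are independent at $p$), so the Grothendieck point residue $\mathrm{res}_p$ is defined. The key algebraic identity is the standard transformation law for residues combined with the ``trace formula'' for the fibered sum: for a holomorphic $q$-form $\phi$ and any choice of a size-$q$ subset $I\subset\{1,\ldots,n-1\}$, the component of $p^*\phi$ in the directions dual to $(l_j)_{j\notin I}$ is recovered as a residue of the $n$-form $\phi\wedge df\wedge\bigwedge_{j\notin I}dl_j$ divided by $f\cdot l_1\cdots l_{n-1}$. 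Concretely I would use that on $V\cap C_a$ the functions $(l_j)_{j\notin I}$ restrict to local coordinates transverse to $V\cap C_a$ inside $V$ (after possibly reindexing), express $\phi|_V$ in terms of those coordinates, and apply the residue formula $\mathrm{res}_p\big[\frac{g\, df\wedge dl_{j_1}\wedge\cdots}{f\, l_1\cdots l_{n-1}}\big]$ = (appropriate evaluation of $g$ at $p$), which is the usual ``$\phi$ = $\sum$ residues of $\phi\wedge(\text{Jacobian factors})$'' principle; summing over $p\in V\cap C_a$ and over $I$ reconstitutes all $q$-form components of $Tr_V\phi$. The monomial weights $t^{|M|}$ and the differentials $da_M=\bigwedge_{i\in I}da_{im_i}$ appear because differentiating $l_i(a_i,t)=\sum_m a_{im}t^m$ with respect to the coefficient $a_{im}$ produces the monomial $t^m$; expanding $p_i^*\phi$ in the basis $\{da_{im}\}$ of $1$-forms on $\pp(\Gamma(X,L_i))$ and matching with the chain rule applied to the implicit equations $l_i(a_i,p_i(a))=0$ yields precisely the stated sum over $M\in\prod_{i\in I}P_i$.

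The remaining point is to pass from this identity, valid as equality of holomorphic $q$-forms on the dense open regular locus, to an equality of meromorphic forms on all of $U'$. Here I would invoke the fact (recalled in Section~\ref{sec:3.1}, following \cite{hp:gnus}) that the Grothendieck residue $\mathrm{Res}[\cdots]$ of a meromorphic $n$-form depending on parameters is itself meromorphic in those parameters — indeed a rational function of the coefficients of $(f,l_1,\ldots,l_{n-1})$ times the residue of $\phi$ along its polar set — so the right-hand side defines a meromorphic $q$-form on $U'$; since it agrees with $Tr_V\phi$ on a dense open set and $Tr_V\phi$ is known to be meromorphic on $U'$ (current representation of the trace, Section~\ref{sec:3.2}), the two coincide. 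The step I expect to be the main obstacle is the bookkeeping in the local residue computation: verifying that, after the reindexing forced by transversality, the Jacobian factors $df\wedge\bigwedge_{j\notin I}dl_j$ really isolate the correct component of $p^*\phi$ with the correct sign and with no spurious contributions from the other subsets $I$, and that the combinatorial index set $M\in\prod_{i\in I}(P_i\cap\zp^n)$ exactly exhausts the coefficients that can appear — this requires carefully tracking the duality between $\{da_{im}\}_m$ and the monomials $\{t^m\}_m$ through the implicit function theorem applied to the system $l_i(a_i,p_i(a))\equiv 0$.
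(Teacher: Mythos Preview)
Your strategy is correct and would yield the proposition, but it takes a different route from the paper's proof. You work \emph{pointwise on the regular locus}: restrict to $a$ with transversal intersection, compute $p_j^*\phi$ at each $p_j(a)$ via the implicit function theorem applied to $l_i(a_i,p_j(a))\equiv 0$, identify the resulting Jacobian quotients with local Grothendieck residues at simple zeros, and then extend meromorphically by analytic continuation. This is legitimate, and your acknowledged obstacle---tracking the Jacobian factors $df\wedge\bigwedge_{j\notin I}dl_j$ and the duality $\partial_{a_{im}}l_i=t^m$ through the implicit differentiation---is real but purely mechanical.

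The paper, by contrast, never leaves the level of currents and never invokes the implicit function theorem. It writes the pairing $\langle Tr_V\phi,\varphi\rangle$ as an integral of $([V]\wedge\phi\wedge[I_U])\wedge\varphi$ over $U\times U'$, then replaces both integration currents by their Poincar\'e--Lelong representations
\[
[V]=df\wedge\bar\partial\Big[\frac{1}{f}\Big],\qquad [I_U]=\Big(\bigwedge_{i=1}^{n-1}d_{(t,a)}l_i\Big)\wedge\bigwedge_{i=1}^{n-1}\bar\partial_{(t,a)}\Big[\frac{1}{l_i}\Big].
\]
A single bidegree count then does all the work: since $\varphi$ has bidegree $(l-q,l)$ in $a$, only those terms in the expansion of $\bigwedge_i d_{(t,a)}l_i$ contributing exactly $q$ holomorphic $da$'s survive, which forces the choice of a subset $I$ with $|I|=q$ and splits each $d_{(t,a)}l_i$ into $d_t l_i$ (for $i\notin I$) or $d_{a_i}l_i$ (for $i\in I$); similarly all the $\bar\partial$'s must fall in $t$. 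Expanding $d_{a_i}l_i=\sum_{m\in P_i}t^m\,da_{im}$ then gives the monomials $t^{|M|}$ and the $da_M$ at once, and the residue current $\bar\partial[1/f]\wedge\bigwedge_i\bar\partial_t[1/l_i]$ acting on an $n$-form in $t$ is by definition the Grothendieck residue. The upshot is that the paper's argument completely bypasses the Jacobian bookkeeping you flag as the main difficulty: no pointwise computation, no implicit differentiation, and the decomposition over $I$ and $M$ falls out of the bidegree constraint rather than from matching components of $p_j^*\phi$ by hand. Your approach is more elementary in its ingredients but heavier in execution; the current-theoretic argument is shorter and handles the singular and non-transversal cases uniformly, without a separate analytic-continuation step.
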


\begin{proof}
The meromorphic form $Tr_V\phi=q_{U*}p_{U}^*([V]\land\phi)$ is a $(q,0)$-current on $U^{'}$ which acts on test-forms $\varphi$ by
\begin{eqnarray*}
\langle Tr_V\, (\phi)\,,\,\varphi\rangle &=&
\int_{U^{'}}  Tr_V (\phi) (a)\land \varphi(a) \\
&=&\int_{ I_U} ([p_U^{-1}(V) (a)]\wedge
p_{U}^* [\Phi]) \land {q_U}^* (\varphi)(t,a)\\
&=&\int_{U\times U^{'}} ([V](t)\land \phi(t)\land[I_U](t,a))
\land \varphi(a).
\end{eqnarray*}
For any complete intersection $Z=\{g_1=\cdots=g_k=0\}$ in a complex manifold, the $(k,k)$-current of integration $[Z]$ is attached to the $(k,0)$-residue current 
\begin{displaymath}
\langle \bar{\partial}\frac{1}{g_1}\land\cdots\land\bar{\partial}\frac{1}{g_k},\Psi \rangle:=\big(\frac{1}{2i\pi}\big)^k \lim_{\underline{\ep}\rightarrow 0}\int_{|g_1|=\ep_1,\ldots|g_k|=\ep_k}\frac{\Psi}{g_1\cdots g_k}
\end{displaymath}
by the Poincar\'{e}-Lelong equation 
$$
[Z]=dg_1\land\cdots\land dg_k \land \bar{\partial}\frac{1}{g_1}\land\cdots\land\bar{\partial}\frac{1}{g_k}.
$$
In our situation, this formula gives a local expression for the $(2n-1,n)$-current $T:=[V]\land[I_U]\wedge \phi$ on $U\times U^{'}$
$$
T=\phi\land df\wedge \Big(
\bigwedge\limits_{i=1}^{n-1} d_{(t,a)}l_i\Big)
\wedge \overline\partial \Big[\frac{1}{f}\Big]\wedge
\Big(\bigwedge\limits_{i=1}^{n-1}
\bar{\partial}_{(t,a)} \Big[\frac{1}{l_i}\Big]\Big).
$$
Since the current $Tr_V\phi=q_{U*} T$ acts on test-forms of bidegree $(l-q,l)$ \textit{in $a$}, where $l={\rm dim}(X^{'})$, we have $Tr_V\phi=q_{U*} T'$, where
$$
T'=\sum_{I\subset\{1,\ldots,n-1\}, |I|=q}\phi\land df\wedge \Big(
\bigwedge_{i\notin I} d_{t}l_i\Big)\wedge \Big(
\bigwedge_{j\in I} d_{a_j}l_j\Big)
\wedge \overline\partial \Big[\frac{1}{f}\Big]\wedge
\Big(\bigwedge\limits_{i=1}^{n-1}
\bar{\partial}_{t} \Big[\frac{1}{l_i}\Big]\Big).
$$
Since 
$$
\langle \overline\partial \Big[\frac{1}{f}\Big]\wedge
\Big(\bigwedge\limits_{i=1}^{n-1}
\bar{\partial}_{(t,a)} \Big[\frac{1}{l_i}\Big]\Big)\,,\, \psi\rangle={\rm Res}\, \left[\begin{matrix}
\psi \cr 
f, l_1,\ldots ,l_{n-1}
\end{matrix}
\right]
$$
for any meromorphic $n$-form $\psi$, this gives the desired residual expression for the trace using linearity of Grothendieck residues and expanding the form $\bigwedge\limits_{j\in I} d_{a_i}l_i$ . 
\end{proof}

\begin{rema}
In the algebraic case $U=X$, the previous residual representation allows to compute explicitly the polar divisor for the toric Abel-transform using results in \cite{GK:gnus} and \cite{DK:gnus} who give denominators formulae for toric residues depending on parameters (see \cite{WM3:gnus} for the very ample case).
\end{rema}

We are now in position to prove the Abel-inverse theorem in smooth complete toric varieties.

\section{A toric version of the Abel-inverse theorem}\label{sec:3.4}

Let $E=L_1\oplus\cdots\oplus L_{n-1}$ be an essential globally generated vector bundle on $X$ with polytopes $P_1,\ldots,P_{n-1}$ defined as before. We suppose that $E$ satisfies the following additional condition~:
\vskip 2mm
\noindent
\textit{There exists a chart $U_{\sigma}$, $\sigma\in \Sigma(n)$ such that each polytope $\Delta_{i,\sigma}$ $i=1,\ldots,n-1$ defined in section~\ref{sec:2.1} contains the elementary simplex of $\rp^n$.  $\hfill (*)$}
\vskip 2mm
\noindent
Condition $(*)$ means that ${\rm dim}\, H^0(V(\tau),L_{i|V(\tau)})\ge 2$, $i=1,\ldots,n-1$ for any  $n-1$-dimensional cone $\tau\subset\sigma$. There always exist bundles $E$ who satisy $(*)$. Geometrically, it means that a generic hypersurface $H\in|L_i|$ meets any one dimensional orbit closure $V(\tau)$ meeting $U_{\sigma}$. Let $U_0:=U_{\sigma}$ be such a chart. We prove the following toric version of the classical Abel-inverse theorem:

\begin{theo}\label{t3.2}
Let $V$ be an analytic hypersurface of an $E$-concave open set $U\subset X$ with no components included in $X\setminus U_0$, and let $\phi$ be a meromorphic $(n-1)$-form on $V$ which does not vanish identically on any component of $V$. Then, there exists an algebraic hypersurface $\wt{V}\subset X$ and a rational form $\wt{\phi}$ on $\wt{V}$ such that 
$$
\wt{V}_{|U}=V\,\,\,\,\, and\,\,\,\,\wt{\phi}_{|V}=\phi
$$
if and only if the meromorphic form $Tr_V\phi$ is rational in the constant coefficients $a_0=(a_{1,0},\ldots,a_{n-1,0})$ of the $n-1$ polynomial equations of $C_a$ in $U_0$.
\end{theo}

\begin{rema}
The condition $\wt{V}_{|U}=V$ is equivalent to that the intersection number $\wt{V}\cdot L_1\cdots L_{n-1}$ is equal to the cardinality $N$ of the finite set $V\cap C_a$ for a generic $a\in U^{'}$. From proposition~\ref{p2.2}, this is equivalent to that the intersection of the hypersurface $\wt{V}$ with the torus $\mathbb{T}$ is given by a Laurent polynomial whose Newton polytope $P$ satisfies $MV(P,P_1,\ldots,P_{n-1})=N$. For the complete characterization of the class of $\wt{V}$ in the Picard group ${\rm Pic}(X)$ (or, equivalently the polytope $P$ of $\wt{V}$), we need supplementary degree conditions in terms of traces of appropriate rational functions on $X$ (see \cite{WM3:gnus}).
\end{rema}

\begin{proof} 
\textit{Direct implication.} Since $\wt{V}_{|U}=V$, then $V\cap\, C_a = \wt{V}\cap\, C_a$ for any $a\in U^{'}$ and the form $Tr_V \phi$ coincides on $U^{'}$ with the Abel-transform $\mathcal{A}([\wt{V}]\land\wt{\phi})$. This  $(q,0)$-current is defined on the product of projective spaces $X^{'}$, and $\bar{\partial}$-closed outside an hypersurface dual to the polar locus of $\wt{\phi}$ on $\wt{V}$. This current thus corresponds to a meromorphic form on $X^{'}$, which, by the GAGA principle, is rational in $a$ (so in $a_0$).
\vskip1mm
\noindent
\textit{Converse implication.} Under a monomial change of coordinates on the torus, we can suppose that the affine coordinates $x=(x_1,\ldots,x_n):=(x_{1,\sigma},\ldots,x_{n,\sigma})$ of $U_0=U_{\sigma}$ coincide with the torus coordinates $t=(t_1,\ldots,t_n)$ so that the polytopes $\Delta_{D_i,\sigma}$ coincide with the polytopes $P_i$ attached to the line bundle $L_i$. Thus, every $E$-subvariety $C_a$ has affine equations
$$
C_a\cap U_{0}=\{l_1(a_1,x)=\cdots=l_{n-1}(a_{n-1},x)=0\}
$$
where the polynomials $l_i(a_i,\cdot)$ are supported by the polytopes $P_i\subset (\rp^{+})^n$ containing the elementary simplex of $\rp^n$ with a constant term  $a_{i0}$ for $i=1,\ldots,n-1$.

By assumption, the sets $V\cap (X\setminus U_0)$ and $V\cap {\rm pol}(\phi)$ have codimension at least $2$ in $U$. Thus, we can suppose that $U^{'}$ is a neighborhood of a point $\alpha\in {\rm Reg}_V(U^{'})$ such that the intersection 
$$
V\cap C_a=\{p_1(a),\ldots,p_N(a)\}
$$
is transversal, does not meet the polar locus of $\phi$ and is contained in $U_0$ for all $a\in U^{'}$. So we suppose $V$ smooth, included in $U_0$, and $\phi$ holomorphic on $V$. In particular, we can use affine coordinates $(x_1,\ldots,x_n)$ of the chart $U_0$ to compute traces.

We now extend to the toric situation a lemma of ``propagation'', crucial in the original proof in \cite{hp:gnus}. 

\subsection{The propagation principle}\label{sec:3.4.2}
We show here the following lemma.

\begin{lemm}\label{l3.1}
If $Tr_V\phi$ is rational in $a_{i0}$, then so is the form $Tr_V h\phi$ for every polynomial $h\in \cp[x_1,\ldots,x_n]$ supported in the convex cone generated by $P_i$.
\end{lemm}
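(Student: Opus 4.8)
The strategy is to lift the statement to the residual representation of the trace given by Proposition~\ref{p3.1}, exploit the fact that the exponent set $P_i$ already labels coordinates $a_{im}$ on the parameter space, and then \emph{differentiate with respect to these parameters} to produce the monomials of $h$. Concretely, write $h=\sum_{m} c_m x^m$ with $m$ ranging over lattice points of the convex cone generated by $P_i$. For a single monomial $x^m$ with $m\in P_i$, multiplying the $i$-th equation $l_i(a_i,x)$ by a new formal coefficient amounts to the elementary observation that $\partial l_i/\partial a_{im}=x^m$, and hence, differentiating the residual kernel of Proposition~\ref{p3.1} in the variable $a_{im}$ (the residue depends meromorphically on $a$, so this is legitimate), one gets an expression in which $x^m$ is inserted against the form $df\wedge\bigwedge_{j\notin I}dl_j$. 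The point is that $\partial_{a_{im}}\bigl(\mathrm{Res}[\,\cdot\,]\bigr)$ is, up to a sign and lower-order correction terms (coming from $\partial_{a_{im}}$ hitting the $dl_i$ factor or the $l_i$ in the denominator), again a residue of the type that represents $Tr_V(x^m\phi)$. So $Tr_V(x^m\phi)$ is, modulo already-established rational expressions, a parameter-derivative of $Tr_V\phi$, hence rational in $a_{i0}$ whenever $Tr_V\phi$ is.

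First I would set up the case $h=x^m$ with $m$ an actual vertex or lattice point of $P_i$ and check carefully that $\partial_{a_{im}}$ applied to the residual representation of $Tr_V\phi$ reproduces $Tr_V(x^m\phi)$ up to terms that are themselves traces of forms already known to have rational trace (for instance $Tr_V$ of forms obtained by differentiating $\phi$, or of $\phi$ times a monomial of strictly smaller ``degree'' in a suitable ordering). This suggests an induction on the monomials of $P_i$ ordered by, say, total degree or by the partial order of the cone. Then I would pass from $P_i$ itself to the convex cone it generates: a lattice point $m$ in that cone is a nonnegative rational—hence, after clearing, a nonnegative integer—combination of points of $P_i$, so $x^m$ arises by iterating the single-monomial step. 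Iterating the parameter-differentiation a bounded number of times keeps us in the field of rational functions of $a_{i0}$ (and of the other coordinates, but the hypothesis and conclusion are both about rationality in $a_{i0}$ with the remaining coordinates as constants/parameters), since rationality is preserved under $\partial_{a_{i0}}$ and under the auxiliary parameter derivatives.

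The main obstacle I anticipate is bookkeeping the ``correction terms'' rigorously: when one differentiates $\mathrm{Res}\bigl[\,t^{|M|}\phi\wedge df\wedge\bigwedge_{j\notin I}dl_j \;/\; (f,l_1,\dots,l_{n-1})\bigr]$ with respect to $a_{im}$, the derivative lands on three places — the numerator through the $dl_i$ factor (if $i\notin I$), the denominator through the $l_i$ factor, and, in the sum over $M\in\prod P_i$, the differential $da_M$ — and one must show that each resulting piece is again of the admissible form, i.e. a trace of a meromorphic $(n-1)$-form times a monomial supported in the cone, with the inductive ``size'' strictly decreased, so that the induction closes. Handling the denominator derivative $\partial_{a_{im}}(1/l_i)=-x^m/l_i^{2}$ cleanly is the delicate point; I expect one resolves it by recognising $-x^m/l_i^{2}$ inside the residue as, after an integration-by-parts / transformation-law manoeuvre for Grothendieck residues, a residue with the same denominators $(f,l_1,\dots,l_{n-1})$ but a modified numerator, which is exactly what one needs. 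Once that identity is in place, the lemma follows by the induction described above.
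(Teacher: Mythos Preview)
Your overall strategy—reduce to monomials $x^m$ with $m\in P_i$, differentiate the residual representation in $a_{im}$, and induct—matches the paper's. But the step you flag as ``delicate'' is genuinely broken as you have stated it, and the fix is the actual content of the lemma.

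Write $v_{m'}=\mathrm{Res}\bigl[x^{m'}\phi\wedge df\,/\,(f,l_1,\dots,l_{n-1})\bigr]$. Differentiating in $a_{im}$ indeed hits $1/l_i$ and produces
\[
\partial_{a_{im}}v_{m'}=-\,\mathrm{Res}\!\left[\begin{matrix} x^{m+m'}\phi\wedge df\\ f,\,l_1,\dots,l_i^{2},\dots,l_{n-1}\end{matrix}\right].
\]
There is \emph{no} transformation law that turns this $l_i^{2}$-residue back into a residue with the original denominators $(f,l_1,\dots,l_{n-1})$: the ideals $(f,\dots,l_i,\dots)$ and $(f,\dots,l_i^{2},\dots)$ are different, and the transformation law only compares generators of the \emph{same} ideal. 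So your hoped-for ``integration-by-parts / transformation-law manoeuvre'' does not exist, and the claim that $Tr_V(x^m\phi)$ is, up to correction terms, a parameter-derivative of $Tr_V\phi$ is false.

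What is true—and this is the paper's key observation—is that the very same $l_i^{2}$-residue above equals $\partial_{a_{i0}}v_{m+m'}$, because $\partial_{a_{i0}}l_i=1$. Thus
\[
\partial_{a_{im}}v_{m'}=\partial_{a_{i0}}v_{m+m'}.
\]
So what you actually know is that the $a_{i0}$-\emph{derivative} of $v_{m+m'}$ is rational in $a_{i0}$; equivalently, $v_{m+m'}$ is an $a_{i0}$-\emph{antiderivative} of a rational function of $a_{i0}$. This is not enough: antiderivatives of rational functions can have logarithms. One must rule out simple poles of $\partial_{a_{i0}}v_{m+m'}$ in its partial-fraction decomposition in $a_{i0}$.

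The paper's trick for this is what your plan is missing. One notes that the same rational function $\partial_{a_{i0}}v_{m+m'}=\partial_{a_{im}}v_{m'}$ admits \emph{two} primitives in two $\cp$-linearly independent directions: $v_{m+m'}$ in the $a_{i0}$-direction and (for generic $c\in\cp^*$) $v_{m'}+c\,v_{m+m'}$ in the $(a_{i0}+c\,a_{im})$-direction, since
\[
\partial_{a_{i0}+c\,a_{im}}v_{m'}=\partial_{a_{i0}}v_{m'}+c\,\partial_{a_{im}}v_{m'}=\partial_{a_{i0}}\bigl(v_{m'}+c\,v_{m+m'}\bigr).
\]
A rational function of $a_{i0}$ that has a primitive in two independent affine directions cannot have simple poles in $a_{i0}$; hence both primitives are rational in $a_{i0}$, and since $v_{m'}$ is rational by induction, so is $v_{m+m'}$. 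This is the inductive step that then propagates from $P$ to $P+kP_i$ for all $k$, covering the cone generated by $P_i$.

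In summary: your differentiation idea gives only $\partial_{a_{i0}}v_{m+m'}$ rational, not $v_{m+m'}$ itself; the missing ingredient is the two-primitive argument excluding logarithmic terms.
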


\begin{proof}
We note $f=0$ the equation of $V$ in $U$ (so that $f$ is holomorphic near the finite set $\{p_1(\alpha),\ldots,p_N(\alpha)\}$) and we define 
$$
v_m:={\rm Res}\, \left[\begin{matrix}
x^m \phi\land df \cr 
f, l_1,\ldots  ,l_{n-1}
\end{matrix}
\right]
$$
for every $m\in \zp$. Proposition~\ref{p3.1} applied  with $q=n-1$ implies the equality
\begin{eqnarray*}
Tr_V\, x^m\phi=\sum_{M\in P_1\times\cdots\times P_{n-1}}{\rm Res}\, \left[\begin{matrix}
x^{|M|+m}\phi \land df  \cr 
f(x), l_1(a_1,x),\ldots  ,l_{n-1}(a_{n-1},x)
\end{matrix}
\right]\,da_M
\end{eqnarray*}
for all $m\in\zp^n$. Thus we need to show that the meromorphic functions $v_{m}$ on $U^{'}$ are rational in $a_{i0}$ for all $m$ in $P+kP_i$ and all $k\in \np$. For $k=0$, this is our hypothesis since the meromorphic functions $v_m$ are coefficients of the trace $Tr_V\phi$. We suppose $v_{m'}$ rational in $a_{i0}$ for $m'\in P+kP_i$ and we show that $v_{m+m'}$ is rational in $a_{i0}$ for every $m\in P_i$.

Using Cauchy integral representation for Grothendieck residues and Stokes theorem, we see that for all $m\in P_i$ and all $m'\in \np^n$, we have
\begin{eqnarray*}
\partial_{a_{im}}v_{m'} & = & -{\rm Res}\, \left[\begin{matrix}
x^{m'} \partial_{a_{im}}l_i\phi\land df \cr 
f, l_1,\ldots,l_i^2,\ldots  ,l_{n-1}
\end{matrix}
\right]\\
& = & -{\rm Res}\, \left[\begin{matrix}
x^{m+m'} \phi\land df \cr 
f, l_1,\ldots,l_i^2,\ldots  ,l_{n-1}
\end{matrix}
\right] = \partial_{a_{i0}}v_{m+m'}.
\end{eqnarray*}
This is also equivalent to that the form $Tr_V x^m\phi$ is closed on $U^{'}$, since the form $x^m\phi$ is of maximal degree on $V$ and the $d$-operator commutes with $p_{U}^*$ and $q_{U*}$. 

So, if $m'\in kP_i$, our hypothesis implies that $\partial_{a_{i0}}v_{m+m'}$ is rational in $a_{i0}$, and we want to show that it has no simple pole in $a_{i0}$ in its decomposition into simple elements in $a_{i0}$. If $\partial_{a_{i0}}[v_{m+m'}]=0$, it's trivially true. Otherwise, there exists $c\in\cp^*$ such that the functions $v_{m'}$ and $v_{m'}+c\, v_{m+m'}$ are $\cp$-linearly independent. Then:  
$$
\partial_{a_{i0}}[v_{m'}+c v_{m+m'}]=\partial_{a_{i0}}[v_{m'}]+c\, 
\partial_{a_{i m}}[v_{m'}]=\partial_{a_{i0}+ca_{i m}}[v_{m'}]
$$
so that the function $\partial_{a_{i0}}[v_{m'}+c v_{m+m'}]$ (rational in $a_{i0}$) admits the two linearly independents primitives $v_{m'}+c v_{m+m'}$ and $v_{m'}$ in the two linearly independent directions
$a_{i0}$ and $a_{i0}+c\, a_{i m}$. Thus it can not have simple poles in its decomposition in simple elements in $a_{i0}$ and the function $v_{m'}+c v_{m+m'}$ is rational in $a_{i0}$. Since $v_{m'}$ is assumed to be rational in $a_{i0}$, so is $v_{m+m'}$. 
\end{proof}

Let us come back to the proof of theorem~\ref{t3.2}.

\subsection{The inversion process}

We devide it in two steps.
\vskip 1mm
\noindent
{\it Step 1. Extension of $V$.} The finite degree-$N$ field extension $[\cp(I_V):\cp(U^{'})]$ is generated by the meromorphic functions $x_1,\ldots,x_n$ considered as elements of $\cp(I_V)$. Thus, for $c=(c_1,\ldots,c_n)\in \cp^n\setminus\{0\}$ generic, the meromorphic function defined by $y=c_1x_1+\cdots+c_nx_n$ on $I_V$ is a primitive element for this extension. We note $y_j(a)$ the analytic functions $y(p_j(a))$ for $j=1,\ldots,N$. The unitary polynomial of $\cp(U^{'})[Y]$ 
\begin{eqnarray*}
P(a,Y) & :=& (Y-y_1(a))\times\cdots\times (Y-y_N(a))\\
&=& Y^N+\sigma_{N-1}(a)Y^{N-1}+\ldots+\sigma_0(a),
\end{eqnarray*}
has degree $N$ and satisfies $P(a,y)_{I_V}\equiv 0$. It is thus the minimal (and the characteristic) polynomial of $y$. Let us define the meromorphic function on $U^{'}$ 
$$
w_k:={\rm Res}\, \left[\begin{matrix}
y^k \phi\land df \cr 
f, l_1,\ldots,l_i,\ldots  ,l_{n-1}
\end{matrix}
\right],
$$
and let us show that the matrix of traces
$$
M:=\left(\begin{matrix}
w_0 & \ldots   &  w_{N-1} \\
w_1 & \ldots   &  w_{N} \\
\vdots & \ddots & \vdots \\
w_{N-1} & \ldots   &  w_{2N-2} 
\end{matrix}
\right)
$$
is invertible in $\cp(U^{'})$. Since $P(a,y)\in \cp(U\times U^{'})$ vanishes identically on the reduced set $I_V$, then 
$$
{\rm Res}\, \left[\begin{matrix}
y^k P(a,y)\phi\land df \cr 
f, l_1,\ldots,l_i,\ldots  ,l_{n-1}
\end{matrix}
\right]=0
$$
for any $k\in \np$. Expanding this condition by linearity for $k=0,\ldots,N-1$, we obtain
$$
M \left(\begin{matrix} \sigma_0 \\
\vdots \\
\sigma_{N-1}\end{matrix}
\right)=\left(\begin{matrix} -w_N \\
\vdots \\
-w_{2N-1}
\end{matrix}
\right).
$$
Similary, any vector $\lambda=(\lambda_0,\ldots,\lambda_{N-1})\in(\cp(U^{'}))^N$ such that $M \lambda^t = 0$ represents the coefficients of a polynomial $F=F(X,a)\in \cp(U^{'})[X]$ of degree $N-1$ such that
\begin{eqnarray}
{\rm Res}\, \left[\begin{matrix}
F(y,a) y^k \phi \land df \cr 
f(x), l_1(a_1,x),\ldots  ,l_{n-1}(a_{n-1},x)
\end{matrix}
\right]=0,\quad k=0,\ldots N-1.
\end{eqnarray}
Let $m\in \np^n$. Since $y$ defines a primitive element for the degree $N$ extension $[\cp(I_V):\cp(U^{'})]$, there exists a polynomial $R(a,Y)\in \cp(U^{'})[X]$ of degree at most $N-1$ such that $x^m_{|I_V}\equiv R(a,y)_{|I_V}$. By linearity, this implies that the relation $(2)$ still holds when replacing the polynomial $y^{k}$ by any monomial $x^m$. Thus, if $\phi\land df = h(x)dx_1\land\cdots\land dx_n$, then 
$$
F(y,a) h(x) \in (f(x),l_1(a_1,x),\ldots,l_{n-1}(a_{n-1},x))
$$
by the duality theorem. By assumption on $\phi$, the holomorphic function $g$ is not identically zero on any components of $V=\{f=0\}$, so that $F(y,a)$ must vanish identically on the incidence variety $I_V$. By a degree argument, this forces $F\equiv 0$, so that $M$ is invertible in $\cp(U^{'})$.

Since the line bundles $L_i$ satisfy $(*)$, we have $\rp^+ P_i = (\rp^+)^n$ for every $i=1,\ldots,n-1$ and lemma~\ref{l3.1} implies that the meromorphic functions $w_k$ are rational in $a_0$ for every $k\in \np$. Since $M$ is invertible, the coefficients of $P$ are thus rational in $a_0$. Trivially,
$$
x\in C_a\cap U_0 \Longleftrightarrow a_{i0}=a_{i0}-l_i(a_i,x)=:l'_i(a'_i,x),\,\,\forall i=1,\ldots, n-1
$$
where $a_i=(a_{i0},a'_i)$. The function 
\begin{eqnarray*}
Q_{c,a'}(x) &:=& P\big(l'_1(a'_1,x),a'_1,\ldots,l'_{n-1}(a'_{n-1},x),a'_{n-1},c_1x_1+\cdots+c_nx_n\big)\\
&=& \prod_{j=1}^N \Big(y-y_j\big(l'_1(a'_1,x),a'_1,\ldots,l'_{n-1}(a'_{n-1},x),a'_{n-1}\big)\Big)
\end{eqnarray*}
is thus rational in $x$ and defines an algebraic hypersurface
$$
V_{c,a'}:=\{x\in U_0,\,\,\,Q_{c,a'}(x)=0\}
$$
which contains $V$ for every $a'$ in a neighborhood of $\alpha'$. For generic $c\in \cp^n$ the sum $y=\sum_1^n c_ix_i$ remains a primitive element for the extension $[\cp(I_V):\cp(U^{'})]$, and previous construction gives a set
$$
V_0:=\bigcap_{c\,{\rm generic}, a'\,{\rm near}\,\alpha'} V_{c,a'}
$$
which is algebraic and contains $V$. By construction, if $q\in V_0\cap C_a$ for $a\in U^{'}$, there exists $j\in \{1,\ldots,N\}$ such that 
$$
c_1x_1(q)+\cdots+c_nx_n(q)=c_1x_1(p_j(a))+\cdots+c_nx_n(p_j(a))
$$
for $c\in \cp^n$ generic. This implies that $q \in\{ p_1(a),\ldots,p_N(a)\}$ so that 
$$
V_0\cap C_a = V\cap C_a\quad {\rm for}\, {\rm all}\,a\in U^{'}.
$$
Let $\wt{V}$ be the Zariski closure of $V_0$ to $X$. If $C_{a}$ meets $\wt{V}$ outside $V$ for $a\in U^{'}$, then it would remain true in a neighborhood of $a$, contradicting proposition~\ref{p2.4} since $\wt{V}$ meets properly the hypersurface at infinity $X \setminus U_0$. Thus
$$
\wt{V}\cap C_a= V\cap C_a\quad {\rm for \,\, all}\,\,a\in U^{'}.
$$
This shows that $\wt{V}_{|U}=V\cup V'$ where $V'\cap C_a=\emptyset$ for all $a\in U^{'}$. Since $U$ is $E$-concave, this forces $V'$ to be empty and $\wt{V}_{|U}=V$.
\vskip 2mm
\noindent
{\it Step 2. Extension of $\phi$.} By assumption on $U^{'}$, $\phi$ is holomorphic on the smooth analytic set $V$ and can be identified with a holomorphic form in the ambient space. Thus, there exists a holomorphic function $h$ in a neighborhood of the finite set $\{p_1,\ldots,p_N\}$  such that
$$
\phi\land df = h(x)dx_1\land\cdots\land dx_n.
$$
Let $y=c_1x_1+\cdots+c_nx_n$ be as before and consider the Lagrange interpolation polynomial
\begin{eqnarray*}
H(a,Y)&:=&\sum_{j=1}^N \prod_{r=1,r\ne j}^N \frac{Y-y_r(a)}{y_j(a)-y_r(a)}h(p_j(a))\\
&=& \tau_{N-1}(a) Y^{N-1}+\ldots+\tau_1(a) Y + \tau_0 (a).
\end{eqnarray*}
The polynomial $H(a,Y)\in \cp(U^{'})[Y]$ satisfies $H(a,y_j(a))=h(p_j(a))$ for all $j=1,\ldots,N$ and all $a\in U^{'}$, that is
\begin{eqnarray}
H(y,a)_{|I_{V}}=p_{U}^*(h)_{|I_{V}}.
\end{eqnarray}
Thus we have equality
\begin{eqnarray*}
&{\rm Res}\, \left[\begin{matrix}
y^k H(a,y) dx_1\land\cdots\land dx_n \cr 
f(x), l_1(a_1,x),\ldots  ,l_{n-1}(a_{n-1},x)
\end{matrix}
\right]&\\
=&{\rm Res}\, \left[\begin{matrix}
y^k \phi\land df \cr 
f(x), l_1(a_1,x),\ldots  ,l_{n-1}(a_{n-1},x)
\end{matrix}
\right]&
\end{eqnarray*}
for all $k\in \np$. That means that the $N$-uple $(\tau_0,\ldots  ,\tau_{N-1})$ satisfies
\begin{eqnarray}
\begin{matrix}
t_{N-1}\tau_{N-1} &+& \cdots &+& t_0\tau_{0} = w_0 \\
&\vdots &\ddots &\vdots & \vdots \\
t_{2N-2}\tau_{N-1} &+&\cdots &+& t_{N-1}\tau_{0} = w_{N-1},
\end{matrix}
\end{eqnarray}
where 
$$
t_k:={\rm Res}\, \left[\begin{matrix}
y^k dx_1\land\cdots\land dx_n \cr 
f(x), l_1(a_1,x),\ldots  ,l_{n-1}(a_{n-1},x)
\end{matrix}
\right].
$$
As before, the duality theorem implies that system $(3)$ is Cramer. From Point 1, $f$ can be replaced by a polynomial $\wt{f}\in \cp[x_1,\ldots,x_n]$ for which $V_0=\{\wt{f}=0\}$. Thus, the functions $t_k$ are rational in $a$ (so in $a_0$) for every $k\in\np$ while the functions $w_k$ are rational by hypothesis. So $H(a,Y)$ depends rationally on $a_0$ and for any $a=(a_0,a')\in U^{'}$, the function 
\begin{eqnarray*} 
g(a',x) :=  H\big(l'_1(a'_1,x),a'_1,\ldots,l'_{n-1}(a'_{n-1},x),a'_{n-1},c_1x_1+\cdots+c_nx_n\big)
\end{eqnarray*} 
is rational in $x$. From $(2)$ it satisfies $g(a',x)_{|I_{V}}=p_{U}^*(h)_{|I_V}$, so that the rational function $\wt{h}_{a'}(x):=g(a',x)$ coincides with $h$ on $V$ independently of $a'$. The inner product of the rational $n$-form $\wt{h}_{a'}dx_1\land\cdots \land dx_{n}$ with $d\wt{f}$ defines a rational form $\wt{\phi}_{a'}$ on $X$ which satisfies $\wt{\phi}_{a'|V}=\phi$. If the polar locus of $\wt{\phi}_{a'}$ does not meet properly the algebraic hypersurface $\wt{V}$, this also holds in the $E$-concave set $U$, contradicting that $\phi\in M^{n-1}_V$. Thus $\wt{\phi}_{a'}$ defines a rational form $\wt{\phi}$ on $\wt{V}$ which is equal to $\phi$ on $V$. Theorem~\ref{t3.2} is proved. 
\end{proof}

\backmatter

\end{document}